\newcommand{\R}{\mathbb{R}}
\newcommand{\Rd}{\mathbb{R}^2}
\newcommand{\Rt}{\mathbb{R}^3}
\newcommand{\Rn}{\mathbb{R}^{n}}
\newcommand{\Sd}{\mathbb{S}^{2}}
\newcommand{\inte}{\operatorname*{int}}
\newcommand{\conv}{\operatorname*{conv}}
\newcommand{\bd}{\operatorname*{bd}}
\newcommand{\dims}{\operatorname*{dim}}
 \newcommand{\len}{\operatorname*{length}}
\newcommand{\lin}{\operatorname*{lin}}
\newtheorem{lemma}{Lemma}
\newtheorem{theorem}{Theorem}
\newtheorem{corollary}{Corollary}
\title{A characterization of translated convex bodies}
\author{E. Morales-Amaya \\
}
\thanks{This research was supported by the National Council of Humanities Sciences and Technology of Mexico (CONAHCyT), SNI 21120. This work was done partially during a sabbatical visit of the author at University College London (UCL). The author thanks UCL for their hospitality and support.}
\address{Faculty of Mathematics-Acapulco, Autonomous University of Guerrero, Carlos  E. Adame 54, Col. Garita C.P. 39650, Acapulco, Guerrero. Mexico.}
\date{March 10, 2025.}
\begin{document}

\maketitle

\begin{abstract} In this work we present a theorem concerning two convex bodies $K_1, K_2\subset \mathbb{R}^{n}$, where $n\geq 3$, and two families of their sections determined by tangent planes of two spheres 
$S_i\subset \inte K_i$, $i=1,2$. The theorem considers pairs of parallel supporting planes $\Pi_1$, $\Pi_2$ for $S_1$ and $S_2$, respectively, 
such that these planes correspond in the following sense: the outer normal vectors of the supporting half-spaces determined by $\Pi_1$, $\Pi_2$ are identical in direction. Under this condition, if the sections $\Pi_1\cap K_1$, $\Pi_2\cap K_2$ are translations of each other for every such pair of corresponding planes, then the theorem states: (1) if $S_1$, $S_2$ have the same radius, then the bodies $K_1$ and $K$ are translations of each other; (2) if $S_1$, $S_2$ have distinct radii, then both $K_1$ and $K$ must be spheres.
 \end{abstract}
\section{Introduction.}  
In \cite{ro1}, A. Rogers proved that if every pair of parallel 2-sections of two convex bo\-dies $K_1,K_2$ passing through two fix points $p_1\in \inte K_1, p_2\in \inte K_2$ are directly homothetics (i.e. such section are homothetic and the ratio of the similitud is positive), the convex bo\-dies are directly homo\-thetic. On the other hand, in \cite {burton}, G. R. Burton proved the general case $p_1, p_2\in  \mathbb{R}^{n}$. An interesting variation of Roger's Theorem was presented in \cite{mo}, there the two families of sections are not longer given by concurrent planes, instead L. Montejano considers two families of planes which, for one hand, vary continuously and, on the other hand, given a direction $v$, there is only one plane of each family orthogonal to $v$, however, Montejano restricts himself to consider only translated sections. The case for homothetic sections was considered in \cite{jmm2}. We can find several interesting problems and result about the determination of a convex bodies by families of section given by concurrent hyperplanes 
in \cite{bl}, \cite{gardner} and \cite{dima}. On the other hand, in \cite{kuru1} and \cite{kuru2} there are results that characterize the sphere in terms of information on the measurement of either the solid angles determined by the support cones of a convex body or the sections of a convex body tangent to some sphere.

In order to present formally our main result, which can be con\-si\-de\-red either as generalization of  A. Roger's theorem given in \cite{ro1} or as geometrical progress of a conjecture due to J. A. Barker and D. G. Larman \cite{bl}, we need the following notation and definitions.  

Let $\mathbb{R}^{n}$ be the Euclidean space of dimension $n$ endowed with the usual inner product $\langle \cdot, \cdot\rangle : \mathbb{R}^{n} \times \mathbb{R}^{n} \rightarrow \R$. We take a orthogonal system of coordinates $(x_1,...,x_{n})$ for  $\mathbb{R}^{n}$. Let $B_r(n)=\{x\in \mathbb{R}^{n}: ||x||\leq r\}$ be the $n$-ball of radius $r$ centered  at the origin, and let $r\mathbb{S}^{n-1}=\{x\in \mathbb{R}^{n}: ||x|| = r\}$ be its boundary. For each vector $u\in \mathbb{S}^{n}$, we denote by $H^ {+}(u)$  the closed half-space $\{x \in \mathbb{R} ^{n}: x\cdot u \leq 0\}$ with unit normal vector $u$, by $H(u)$ its boundary hyperplane $\{x \in \mathbb{R} ^{n}: \langle x,  u\rangle = 0\}$. For $r\in \mathbb{R}$, we denote by $G(u)$, $rG(u)$ the affine hyperplanes $u+H(u)$, $ru+H(u)$ and by $E(u)$, $rE(u)$, $F(u)$, $rF(u)$ the half-spaces $u+H^+(u)$, $ru+H^+(u)$, $u+H^+(-u)$, $ru+H^+(-u)$, respectively (See Fig. \ref{vamos}). 

Let $S_1,S_2\subset \Rn$ be two spheres with centers $p_1,p_2$ and radius $r_1,r_2$, respectively. The parallel supporting planes $\Pi_1,\Pi_2$ of $S_1, S_2$, respectively, are said to be \textit{corresponding} if there exist $u\in \mathbb S^{n-1}$ such that 
\[
\Pi_i=p_i+r_iG(u)\textrm{      }\textrm{      }\textrm{     and    }\textrm{      }\textrm{      }S_i\subset p_i+r_iE(u),\textrm{      }i=1,2. \textrm{      }(\textrm{See}\textrm{      } \textrm{Fig.} \textrm{      }\ref{negrita})
\]
Let $W_1,W_2\subset \Rn$ be two convex bodies, i.e, compact convex sets with non empty interior. The bodies $W_1,W_2$ are said to be \textit{translated} if there exists a non-zero vector $u$ such that $W_2=u+W_2$.
\begin{theorem} \label{paris} 
Let $K_i\subset \mathbb{R}^{n}$  be a convex body, $n\geq 3$, and let $S_i\subset \inte K_i$  be a sphere with center $p_i$ and radius $r_i$, $i=1,2$ .  Suppose that, for every pair of corresponding planes $\Pi_1, \Pi_2$ of $S_1$ and $S_2$, there exists a translation $\psi: \mathbb{R}^{n} \rightarrow \mathbb{R}^{n}$ such that
\[
\psi(\Pi_2\cap K_2) = \Pi_1\cap K_1.
\]	 
If $r_1=r_2$, then $K_1, K_2$ are translated. If $r_1\not=r_2$,  then $K_1$ and $K_2$ are spheres with centers at $p_1$ and $p_2$, respectively. 
\end{theorem}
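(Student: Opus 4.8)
The plan is to move the configuration to the common centre of the two spheres, read off from the hypothesis a correspondence between $\partial K_1$ and $\partial K_2$, and then exploit the rigidity of that correspondence. First I would normalise: replacing $K_i$ by $\widehat K_i:=K_i-p_i$ sends $S_i$ to the concentric sphere $r_i\mathbb{S}^{n-1}$ and sends the corresponding supporting plane in direction $u\in\mathbb{S}^{n-1}$ to $\Pi_i(u)=\{x:\langle x,u\rangle=r_i\}$, tangent to $r_i\mathbb{S}^{n-1}$ at $r_iu$. Writing $\sigma_i(u):=\widehat K_i\cap\Pi_i(u)$, the hypothesis becomes: $\sigma_1(u)$ is a translate of $\sigma_2(u)$ for every $u$. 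Since $\Pi_1(u)=\Pi_2(u)+(r_1-r_2)u$, the translating vector must have the form $T(u)=(r_1-r_2)u+s(u)$ with $s(u)\perp u$, and $\sigma_1(u)=\sigma_2(u)+T(u)$; in particular $\langle T(u),u\rangle=r_1-r_2$, $|T(u)|\ge|r_1-r_2|$, and $u\mapsto T(u)$ (hence $s$) is continuous because $u\mapsto\sigma_i(u)$ is continuous in the Hausdorff metric.

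Next I would pass to the boundary. Because $r_i\mathbb{S}^{n-1}\subset\operatorname{int}\widehat K_i$ we have $r_i\overline B^n\subset\operatorname{int}\widehat K_i$, so every $x\in\partial\widehat K_i$ has $|x|>r_i$ and hence $V_i(x):=\{u\in\mathbb{S}^{n-1}:\langle x,u\rangle=r_i\}$, the set of $u$ for which $\Pi_i(u)$ passes through $x$, is a non-empty $(n-2)$-sphere. Each $\Pi_i(u)$ meets $\operatorname{int}\widehat K_i$ (it contains $r_iu$), so the relative boundary of $\sigma_i(u)$ in $\Pi_i(u)$ is exactly $\partial\widehat K_i\cap\Pi_i(u)$; therefore, if $x\in\partial\widehat K_1$ and $u\in V_1(x)$ then $x$ lies on the relative boundary of $\sigma_1(u)$, so $x-T(u)$ lies on the relative boundary of $\sigma_2(u)$, i.e.\ $x-T(u)\in\partial\widehat K_2$, and moreover $\langle x-T(u),u\rangle=r_2$, so $u\in V_2(x-T(u))$. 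This is the correspondence $(\ast)$: $x\in\partial\widehat K_1,\ u\in V_1(x)\ \Rightarrow\ x-T(u)\in\partial\widehat K_2$; symmetrically $(\ast')$: $y\in\partial\widehat K_2,\ u\in V_2(y)\ \Rightarrow\ y+T(u)\in\partial\widehat K_1$. Conversely $(\ast)$ and $(\ast')$ together are equivalent to the hypothesis.

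Granting the key fact that $s\equiv 0$, both conclusions follow at once. If $r_1=r_2$ then $T\equiv 0$, so $(\ast)$ says $\partial\widehat K_1\subseteq\partial\widehat K_2$; since a convex body is the convex hull of its boundary this forces $\widehat K_1\subseteq\widehat K_2$, and a point of $\partial\widehat K_1$ lying in $\operatorname{int}\widehat K_2$ would contradict $\partial\widehat K_1\subseteq\partial\widehat K_2$, whence $\widehat K_1=\widehat K_2$, i.e.\ $K_1=K_2+(p_1-p_2)$. If $r_1\ne r_2$, put $q:=r_1-r_2\ne 0$; for $x\in\partial\widehat K_1$ and $u\in V_1(x)$ one computes $|x-qu|^2=|x|^2-2qr_1+q^2$, independent of $u$, so the positive–radius $(n-2)$-sphere $\Sigma_x:=\{x-qu:u\in V_1(x)\}$ lies simultaneously in $\partial\widehat K_2$ and on a Euclidean sphere about the origin. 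Hence the hyperplane $\operatorname{aff}\Sigma_x=\{z:\langle z,x\rangle=|x|^2-qr_1\}$ meets $\widehat K_2$ in exactly the round $(n-1)$-ball bounded by $\Sigma_x$ (a convex set whose relative boundary contains a full $(n-2)$-sphere must be the ball that sphere bounds, as one sees by convexifying with any external point). As $x$ ranges over $\partial\widehat K_1$ the normals $x/|x|$ run over all of $\mathbb{S}^{n-1}$, so $\widehat K_2$ has a round $(n-1)$-ball cross-section orthogonal to every direction, and likewise $\widehat K_1$ by $(\ast')$. Since for $n\ge 3$ a convex body possessing a spherical cross-section orthogonal to each direction must be a ball, and since here these cross-sections lie on Euclidean spheres about the origin, $\widehat K_1$ and $\widehat K_2$ are balls centred at the origin, i.e.\ each $K_i$ is a sphere centred at $p_i$.

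The heart of the matter is therefore to establish $s\equiv 0$, and this is the step I expect to be the main obstacle: the sections alone determine $\widehat K_i$ only "tangentially to $S_i$", since each boundary point $x$ with $|x|>r_i$ sits in a whole $(n-2)$-sphere's worth of sections, and one must exclude a non-trivial but globally consistent choice of the in-plane translations $s(u)$. Concretely, $s(u)$ is the difference of the Steiner points of the two sections, $s(u)=\operatorname{st}(\sigma_1(u))-\operatorname{st}(\sigma_2(u))-(r_1-r_2)u$, so the goal is to show this continuous field cannot vary. My approach would be a monodromy argument on $\mathbb{S}^{n-1}$: iterating $(\ast)$ and $(\ast')$ carries a point $x\in\partial\widehat K_1$ to $x+T(u')-T(u)\in\partial\widehat K_1$ whenever $u\in V_1(x)$ and $u'\in V_2(x-T(u))$, and because $\langle T(u),u\rangle$ is the constant $r_1-r_2$ one gets rigid control of how the radial function of $\widehat K_i$ can change along such moves; feeding this back, together with the continuity of $s$ and the connectedness of the fibres $V_i(\cdot)$, one should conclude that $s$ is locally constant on $\mathbb{S}^{n-1}$, hence constant, hence identically $0$ since $s(u)\perp u$ for every $u$ — an analogue of the "the homothety ratio is constant" step in Rogers–Burton type theorems. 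Carrying out this rigidity step is where the real work lies; everything else above is bookkeeping once it is in hand.
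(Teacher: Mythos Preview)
Your reduction is correct and matches the paper's implicit structure: after centring, everything hinges on showing that the tangential component $s(u)$ of the translation vanishes identically. You are also right that once $s\equiv 0$ the case $r_1=r_2$ is immediate. However, the proposal as written is not a proof, because the step you yourself flag as ``where the real work lies'' is not carried out. The monodromy sketch --- iterate $(\ast)$ and $(\ast')$, observe $\langle T(u),u\rangle$ is constant, conclude $s$ is locally constant --- does not go through as stated: the composite move $x\mapsto x+T(u')-T(u)$ lands on $\partial\widehat K_1$ only for very special pairs $(u,u')$, and nothing in your outline forces the radial function to be preserved along such moves, let alone that $s$ be locally constant. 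A constant tangent field on $\mathbb{S}^{n-1}$ must vanish, but you have not established constancy.

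The paper attacks $s\equiv 0$ by entirely different, and much more geometric, means. For $r_1=r_2$ it first invokes the hairy ball theorem to obtain a single $u_1$ with $s(u_1)=0$, and then propagates this equality around $\mathbb{S}^2$ via a delicate analysis of shadow boundaries of $K_1$ with respect to supporting lines of the section $K_1(u_1)$, together with an elementary ``trapezoid lemma'' controlling when two translated planar convex sets sharing four boundary points must coincide. For $r_1\neq r_2$ the paper does not try to prove $s\equiv 0$ directly; instead it sets up a dichotomy: either every translation is purely radial (your $s\equiv 0$), in which case a separate characterisation of the sphere via homothetic tangent sections (their Lemma~\ref{chaparrito}) finishes the job, or some translation has a nonzero tangential part, in which case a contradiction is derived through a chain of lemmas using the cone $C(S_2,p)$ from condition (C), the Ewald--Larman--Rogers theorem on boundary segments, and Brunn's inequality. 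Your proposed route is conceptually cleaner but would need a genuine argument for the rigidity of $s$; the paper's route is heavier but actually closes the gap.

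A secondary issue: in your $r_1\neq r_2$ endgame (granting $s\equiv 0$) you invoke ``a convex body possessing a spherical cross-section orthogonal to each direction must be a ball''. This is true but not entirely trivial, and you would need to supply or cite a proof; the paper sidesteps this by reducing to the homothetic--sections characterisation of Lemma~\ref{chaparrito} instead.
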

\begin{center} 
\begin{figure}[h]
\hspace{1cm}
\includegraphics[scale=.6]{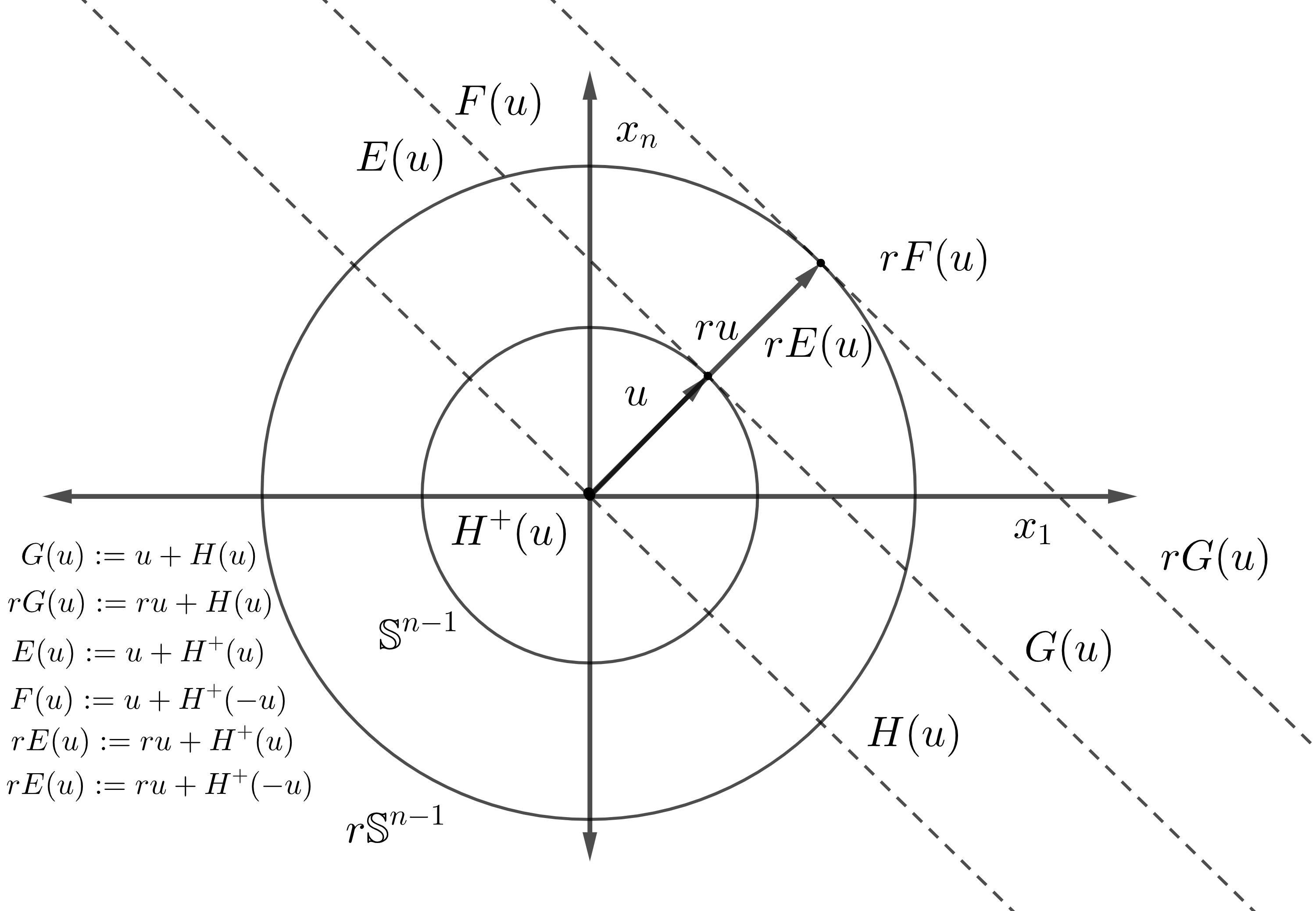}
\caption{Definition of the affine hyperplanes $G(u)$, $rG(u)$ and the half-spaces $E(u)$, $rE(u)$, $F(u)$, $rF(u)$.}
\label{vamos}
\end{figure}
\end{center}
\begin{center} 
\begin{figure}[h]
\hspace{1cm}
\includegraphics[scale=.5]{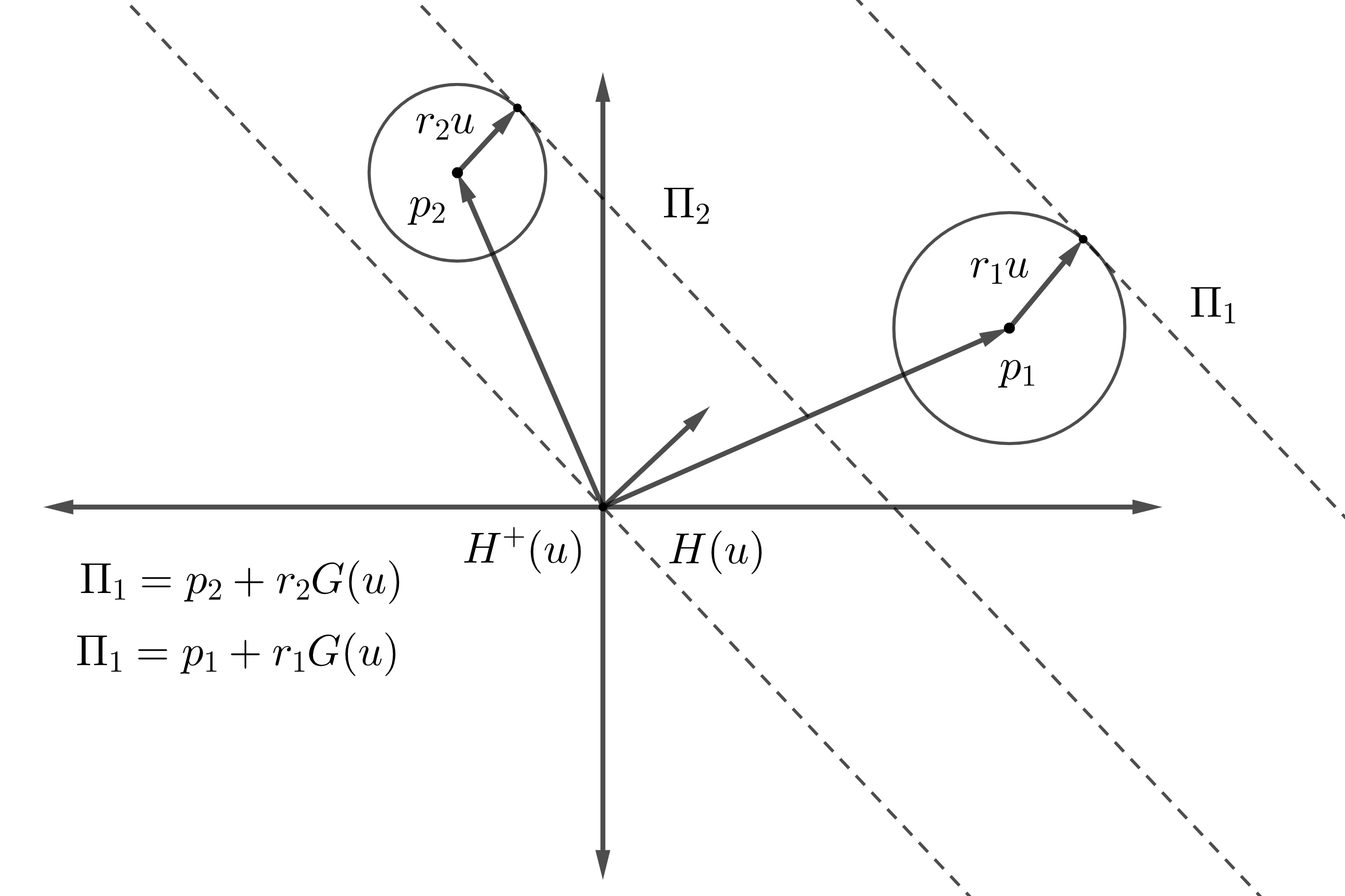}
\caption{A pair $\Pi_1, \Pi_2$ of Corresponding planes.}
\label{negrita}
\end{figure}
\end{center}
\begin{corollary}\label{francesita}
Let $K \subset \mathbb{R}^n$ be a convex body, $n\geq 3$, and let $S\subset K$ be a $(n-1)$-sphere with center $p$. Suppose that: 1) for all supporting hyperplane $\Pi$ of $S$ the section $\Pi \cap K$ is centrally symmetric and 2) for all pairs of parallel supporting hyperplanes $\Pi_1, \Pi_2$ of $S$ the sections $\Pi_1 \cap K$ and $ \Pi_2\cap K$ are translated. Then $K$ is centrally symmetric.
\end{corollary}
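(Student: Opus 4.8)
The plan is to deduce the corollary from Theorem \ref{paris} by comparing $K$ with its reflection in the centre $p$ of $S$. Set $K_1 := K$ and $K_2 := 2p-K = \{\, 2p-x : x\in K \,\}$. Since $S$ is a sphere centred at $p$, the point reflection $x\mapsto 2p-x$ fixes it, so $2p-S=S$; hence in Theorem \ref{paris} we may take $S_1=S_2=S$, both with centre $p$ and the same radius $r$. In particular the hypothesis $r_1=r_2$ of the first alternative will be automatic. Here I assume $S\subset\inte K$, as required in Theorem \ref{paris}; the borderline case $S\cap\bd K\neq\emptyset$ allowed by the corollary's weaker hypothesis $S\subset K$ must be treated separately, and this is in fact the one genuinely delicate point of the argument.

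Next I verify the translation hypothesis of Theorem \ref{paris}. Because $S_1$ and $S_2$ are literally the same sphere, for each direction $u\in\mathbb{S}^{n-1}$ the two corresponding supporting hyperplanes coincide: $\Pi_1=\Pi_2 = p+rG(u) =: \Pi$. So what must be shown is that $\Pi\cap K_2$ is a translate of $\Pi\cap K_1 = \Pi\cap K$. Let $\Pi' := 2p-\Pi$, which is the supporting hyperplane of $S$ parallel to $\Pi$ and lying on the opposite side of $S$ (it is tangent to $S$ at the antipode of the contact point of $\Pi$). A direct computation gives
\[
\Pi\cap K_2 = \Pi\cap(2p-K) = 2p - (\Pi'\cap K).
\]
By hypothesis (1) the section $\Pi'\cap K$ is centrally symmetric, say about $c''$, so $2p-(\Pi'\cap K) = (2p-2c'') + (\Pi'\cap K)$ is a translate of $\Pi'\cap K$; by hypothesis (2) the section $\Pi'\cap K$ is a translate of $\Pi\cap K$. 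Composing, $\Pi\cap K_2$ is a translate of $\Pi\cap K=\Pi\cap K_1$, exactly as Theorem \ref{paris} demands. This is the step that genuinely uses \emph{both} assumptions of the corollary, and it is where the bookkeeping must be done carefully.

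Since $r_1=r_2$, the first alternative of Theorem \ref{paris} now yields that $K_1$ and $K_2$ are translates, i.e.\ there is a vector $v$ with $K_2=v+K_1$, that is, $2p-K = v+K$. Writing $c := p-\tfrac{v}{2}$ this reads $2c-K=K$, so $K$ is symmetric about the point $c$; in other words $K$ is centrally symmetric, which is the assertion. (If one prefers to honour the convention that a "translation" be by a nonzero vector: should Theorem \ref{paris} instead force $K_1=K_2$, then $2p-K=K$ and $K$ is centrally symmetric about $p$ directly, so the conclusion holds in any case.) To summarise, the only substantive idea is the choice $K_2 = 2p-K$; the only technical obstacle is the passage from $S\subset K$ to $S\subset\inte K$; all else is manipulation with the reflection $x\mapsto 2p-x$ plus the observation that the reflection of a centrally symmetric body is a translate of that body.
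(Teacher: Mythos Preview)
Your proof is correct and follows the same approach as the paper: set $K_1=K$, $K_2=2p-K$ (the paper places $p$ at the origin and writes $K_2=-K$), $S_1=S_2=S$, and apply Theorem~\ref{paris} with $r_1=r_2$. Your explicit verification that both hypotheses (1) and (2) are needed to check the translation condition of Theorem~\ref{paris}, and your remark on $S\subset K$ versus $S\subset\inte K$, are details the paper's three-line proof simply omits.
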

\begin{proof}  
We take a system of coordinates such that $p$ is the origin. We apply Theorem \ref{paris}, taking $K_1=K, K_2=-K$ and $S_1=S=S_2$. Therefore $K$ and $-K$ are translated. Thus $K$ is centrally symmetric.  
\end{proof}  
 This work is organized as following:
\begin{enumerate}
\item Introduction.

\item Reduction of the Theorem \ref{paris} to dimension 3.

\item Characterization of the sphere.

\item Lemmas for the case $n=3$ and $r_{1}= r_{2}$. 

\item Proof of the Theorem \ref{paris} in dimension 3 for $r_1=r_2$. 

\item Lemmas for the case $n=3$, $r_{1}\not =r_{2}$.

\item Proof of the Theorem \ref{paris} in dimension 3  for $r_1\not=r_2$. 

\end{enumerate}
\section{Reduction of Theorem \ref{paris} to dimension $n=3$.}  
For the points $x,y \in \Rn$ we will denote by $xy$ the line determined by $x$ and $y$ and by $[x,y]$ the line segment contained in $L(x,y)$ with endpoints $x$ and $y$.  As usual $\inte K$, $\bd K$ will denote the interior and the boundary of the convex body $K$, respectively.  For each vector $u\in \mathbb{S}^{n-1}$, we denote by $\pi_u:\Rn \rightarrow H(u)$ the orthogonal projection parallel to $u$.

Suppose that the bodies $K_1,K_2$ and the spheres $S_1,S_2$ satisfy the conditions of Theorem \ref{paris} for dimension $n, n\geq 4$, and that the Theorem \ref{paris}
holds in dimension $n-1 $. We are going to prove that, for $u\in \mathbb{S}^{n}$, the bodies $\pi_u(K_1)$, $\pi_u(K_1)$ and the spheres $\pi_u(S_1)$, $\pi_u(S_2)$ satisfy the condition of the Theo\-rem \ref{paris} in dimension $n-1$. Consequently, either, if $r_1=r_2$, $\pi_u(K_1)$, $\pi_u(K_1)$ are translated or, if $r_1\not=r_2$, $\pi_u(K_1)$, $\pi_u(K_1)$ are spheres. Thus if all the orthogonal projections of $K_1$ and $K_2$  are translated, in virtue of Theorem 1 of \cite{ro1}, $K_1$ and $K_2$ are translated, otherwise, $K_1$ and $K_2$ are spheres, i.e, the Theorem \ref{paris} holds for dimension $n$.

Notice that the conditions of Theorem \ref{paris} are invariant under translation, consequently we can assume that the centres $p_1,p_2$ are at origin of a system of coordinates. Let $v\in H(u)\cap \mathbb{S}^{n}$. Since $K_1\cap r_1G(v)$ is a translated copy of  $K_2\cap r_2G(v)$, i.e., there exists a vector $\omega \in \Rn$ such that
\[
K_1\cap r_1G(v)= \omega + [K_2\cap r_2G(v)],
\]
then 
\[
\pi_u(K_1\cap r_1G(v)) =\pi_u(\omega) + \pi_u(K_2\cap r_2G(v)),\]
 i.e., 
\[
\pi_u(K_1)\cap  \pi_u(r_1G(v)) =\pi_v(\omega) + \pi_u(K_2)\cap   \pi_u(r_2G(v)), \textrm{ i.e.},
\]
$\pi_u(K_1)\cap \pi_u(r_1G(v))$ is a translated copy of $\pi_u(K_2)\cap \pi_u(r_2G(v))$ for every $v\in H(u)\cap \mathbb{S}^{n}$, that is, the bodies $\pi_u(K_1)$, $\pi_u(K_1)$ and the spheres $\pi_u(B_1)$, $\pi_u(B_2)$ satisfies the condition of the Theo\-rem \ref{paris} in dimension $n-1$.
\section{Characterization of the sphere.}
Given a convex body $K\subset \mathbb{R}^n$ and a point $x\in\mathbb R^n\setminus K$ we denote the cone generated by $K$ with apex $x$ by $\text{C}(K,x)$, i.e., $\text{C}(K,x):=\{x+\lambda (y-x): y\in K,\, \lambda\geq 0\}.$ The boundary of $\text{C}(K,x)$ is denoted by $\text{S}(K,x)$, in other words, $\text{S}(K,x)$ is the support cone of $K$ from the point $x$. We denote the \emph{graze} of $K$ from $x$ by $\Sigma(K,x)$, i.e., $\Sigma(K,x):=\text{S}(K,x)\cap \bd K.$ On the other hand, given a line $L$ (or a vector $u$), we denote by $\text{C}(K,L)$  the cylinder generated by $K$ and $L$, i.e., the family of all the lines parallel to $L$ and with non empty intersection with $K$. The boundary of $\text{C}(K,L)$ is denoted by $\text{S}(K,L)$, in other words, $\text{S}(K,L)$ is the support cylinder of $K$ corresponding to $L$. We denote the \emph{shadow boundary} of $K$ corresponding to $L$ by $\Sigma(K,L)$, i.e., $\Sigma(K,L):=\text{S}(K,L)\cap \bd K.$  
\begin{lemma}\label{chaparrito} 
Let $M_1, M_2 \subset \mathbb{R}^{3} $ be convex bodies, let $B$ be a sphere with 
$B\subset M_1 \cap M_2 $ and let $r$ be a real number, $r>0, r\not=1$.  Suppose that for each supporting plane 
$\Pi$ of $B$ the sections $\Pi \cap M_1$ and $\Pi \cap M_2$ are homothetic, with centre and radius of homothety $\Pi \cap B$ and $r$, respectively. Then $M_1$ and $M_2$ are spheres  concentric with 
$B$.
\end{lemma}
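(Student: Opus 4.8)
The plan is to use a continuity/connectedness argument on the set of points where the two bodies' boundaries ``agree up to the homothety'' and to exploit that a homothety with a fixed ratio $r\neq 1$ has a unique fixed point, which forces rigidity. First, for each supporting plane $\Pi$ of $B$, let $q(\Pi)$ denote the center of the homothety, i.e. the unique point of $\Pi\cap B$; by hypothesis the homothety $h_\Pi$ centered at $q(\Pi)$ with ratio $r$ carries $\Pi\cap M_2$ onto $\Pi\cap M_1$. Since $B\subset M_1\cap M_2$ and the homothety fixes $q(\Pi)\in B\subset M_i$, scaling the section $\Pi\cap M_2$ by $r$ about a point interior to it (relative to $\Pi$) must reproduce $\Pi\cap M_1$; the key elementary observation is that if $r>1$ this says $\Pi\cap M_1\supsetneq \Pi\cap M_2$ unless the two sections are \emph{equal}, while if $r<1$ it says the reverse inclusion, and in either case comparing the two bodies across all supporting planes of $B$ must be consistent with a global inclusion $M_2\subseteq M_1$ or $M_1\subseteq M_2$.

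Next I would pin down the centers. Fix a direction $u\in\mathbb S^2$ and consider the two parallel supporting planes of $B$ orthogonal to $u$; call them $\Pi^+$ and $\Pi^-$, with centers $q^+$ and $q^-$ (the two points of $B$ with outer normals $\pm u$). Applying the homothety relation on $\Pi^+$ and on $\Pi^-$ and using the inclusion from the previous step in both normal directions gives, after chasing the support function of $M_i$ in the direction $u$ restricted to these planes, a linear relation that can only hold for all $u$ if the ``extra width'' of $M_1$ over $M_2$ (or vice versa) is governed by the scaling about the moving center $q(\Pi)$. Concretely, writing $h_{M_i}$ for the support function and using that $q(\Pi)$ traces out $B$, the condition that $h_\Pi(\Pi\cap M_2)=\Pi\cap M_1$ for every supporting $\Pi$ translates into a functional equation for $h_{M_1}-r\,h_{M_2}$ evaluated along the sphere, and the fixed ratio $r\neq 1$ makes this equation rigid enough to conclude that $M_1$ and $M_2$ differ from $B$ only by a radial dilation.

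Then I would run the rigidity step: pick any boundary point $x_1\in\bd M_1$ lying on some supporting plane $\Pi$ of $B$ (every boundary point of $M_1$ lies on at least one such plane, since every direction gives a supporting plane of $B$ and the corresponding section of $M_1$ meets $\bd M_1$), so $x_1=h_\Pi(x_2)$ for a corresponding $x_2\in\bd M_2\cap\Pi$, i.e. $x_1-q(\Pi)=r\,(x_2-q(\Pi))$. Since $q(\Pi)\in B$ and $\|x_i-q(\Pi)\|$ relates to how far $\bd M_i$ sits from $B$ along $\Pi$, iterating this over a one-parameter family of supporting planes through $x_1$ (rotating $\Pi$ about the tangent point) and using $r\neq 1$ forces $q(\Pi)$ to be the \emph{same} point $c$ for all $\Pi$ in that family; but as $\Pi$ ranges over all supporting planes of $B$ the point $q(\Pi)$ ranges over all of $B$, so the only way these can be reconciled is if the relation degenerates, which happens exactly when $\bd M_1$ and $\bd M_2$ are spheres centered at the center of $B$. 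I would finish by verifying that such concentric spheres do satisfy the hypothesis (so the conclusion is sharp) and that no other configuration survives.

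The main obstacle I anticipate is the second step — extracting, from the family of planar homothety relations indexed by the tangent points $q(\Pi)\in B$, a clean global statement about $M_1$ and $M_2$ in $\mathbb R^3$. The planar conditions are ``local'' to each supporting plane, and the centers of homothety move with the plane, so one cannot simply say the two bodies are globally homothetic; the delicate point is to show that the only way a continuously varying family of same-ratio homotheties (with $r\neq 1$, hence each with a distinct fixed point on $B$) can be compatible across overlapping sections is the concentric-sphere case. I expect this to require either a careful support-function computation exploiting that $B$ is a sphere (so the tangent points and normals are in bijection in a rotationally symmetric way), or a clever choice of three mutually non-parallel directions whose supporting planes pairwise intersect, pinning the geometry down by a dimension count.
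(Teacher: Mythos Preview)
Your first step---observing that a homothety of ratio $r\neq 1$ about an interior point of a planar convex set produces a strictly nested copy, hence $\bd M_1\cap\bd M_2=\emptyset$ and one body contains the other---is essentially the paper's first step, and it is correct.

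After that, however, the proposal does not constitute a proof. The support-function paragraph never produces an actual equation: the homothety relation lives in each tangent plane $\Pi$ separately, with a moving centre $q(\Pi)$, and there is no evident way to combine these into a single identity for $h_{M_1}-r\,h_{M_2}$ as functions on $\mathbb S^2$. More seriously, the ``rigidity step'' is wrong as stated. You claim that rotating $\Pi$ through a fixed boundary point $x_1\in\bd M_1$ forces the tangent point $q(\Pi)$ to stay constant; but the tangent points of the supporting planes of $B$ through $x_1$ sweep out exactly the graze circle $\Sigma(B,x_1)$, and nothing in the relation $x_1-q(\Pi)=r(x_2-q(\Pi))$ prevents $q(\Pi)$ from moving---indeed, in the true (spherical) solution it \emph{does} move along that circle. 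So the contradiction you aim for never materialises.

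The paper's argument is quite different and concrete. Once $M_1\subset M_2$, fix $p\in\bd M_2$, take the support cone $S(B,p)$, and look at a component $\Phi$ of $S(B,p)\cap\bd M_1$. For any $v,w\in\Phi$, the lines $pv,pw$ touch $B$ at points $x,y$, and applying the homothety hypothesis in tangent planes at $x$ and at $y$ gives $|xp|/|xv|=|yp|/|yw|=r$; hence $\triangle xpy\sim\triangle vpw$, so $vw\parallel xy$, and $\Phi$ lies in a plane parallel to the graze circle $\Sigma(B,p)$. Thus $\Phi$ is a circle, and one then invokes the Bianchi--Gruber characterization of the sphere to conclude that $M_1$ is a sphere concentric with $B$; the conclusion for $M_2$ follows since all its tangent-plane sections are now circles. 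The missing ingredients in your sketch are precisely this cone/similar-triangles construction and the appeal to an external characterization result.
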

\begin{proof}
We claim that $\bd M_1   \cap \bd M_2=\emptyset$. Suppose that there is a point $p\in \bd M_1 \cap \bd M_2$. Let $\Pi$ be a supporting plane of $B$, $p\in \Pi$ and it touches the boundary of $B$ in the point $x$. Since $B\subset M_1 \cap M_2$, we have $x\in \inte (\Pi \cap M_1), \inte (\Pi \cap M_2)$. If either $\inte (\Pi \cap M_1) \subset \inte (\Pi \cap M_2) $ or $\inte (\Pi \cap M_2) \subset \inte (\Pi \cap M_1)$, then either the centre of homothety $y$ which sends $\Pi \cap M_1$ into $\Pi \cap M_2$ is in a common supporting line of $\Pi \cap M_2$ and $ \Pi \cap M_2$, and therefore $y\not= x$, or the radius $r$ is equal to 1, but, in the first case, we contradicts the hypothesis, i.e., $x$ is the centre of homothety between the sections $\Pi \cap M_1$ and $\Pi \cap M_2$, on the other hand, if $r=1$ then $M_1=M_2$. The case $\inte (\Pi \cap M_1)$ not contained in $\inte (\Pi \cap M_2)$, or viceversa, implies that the centre of homothety is a point outside of the sections $\Pi \cap M_1$, $\Pi \cap M_2$ but again this is in contradiction with the hypothesis that $x\in \inte M_1 \cap \inte M_2$ is the centre of homothety between this two sections.
 \begin{figure}[h]
\hspace{1cm}
\includegraphics[scale=.3]{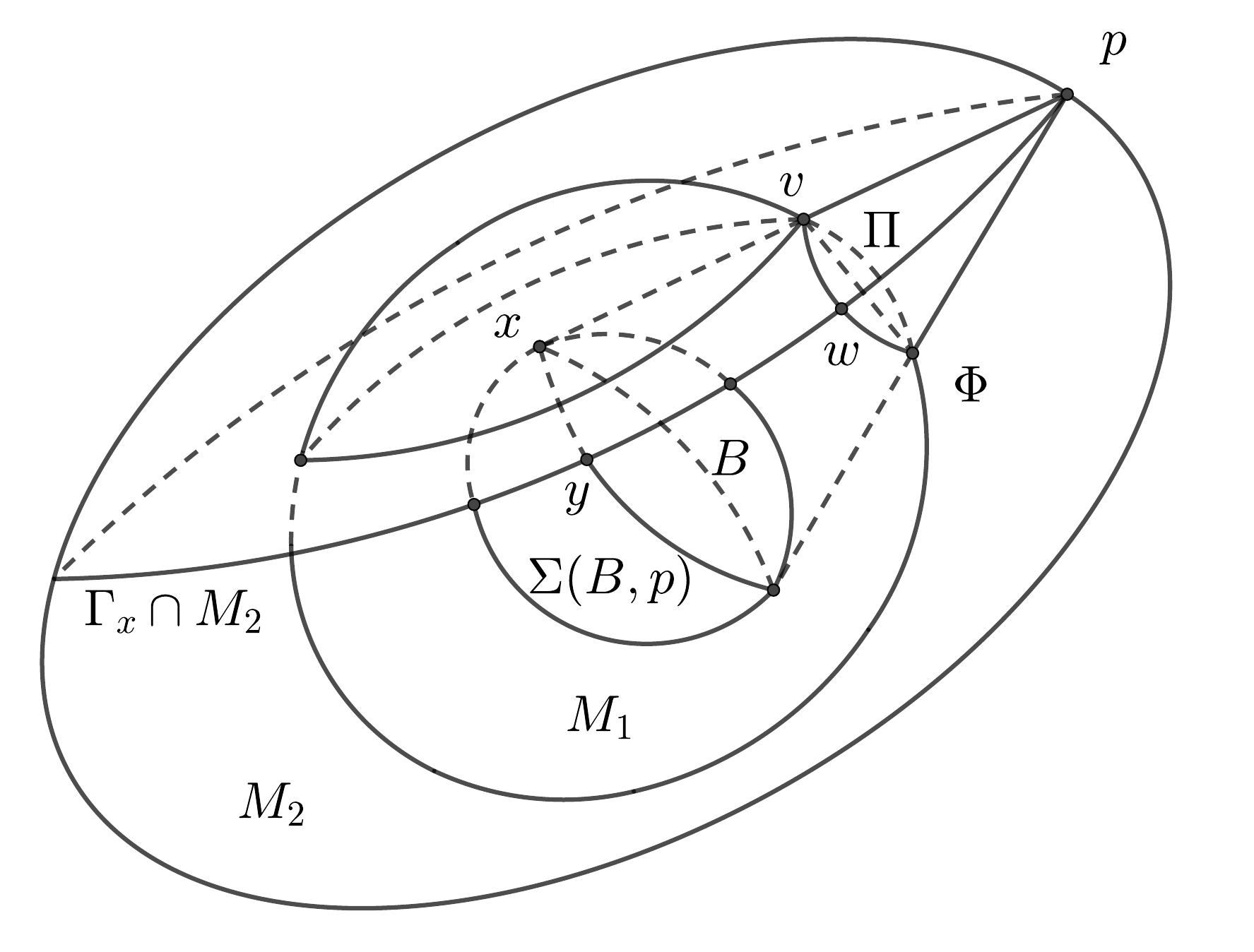}
\caption{Characterization of two concentric spheres by means of homothetic sections tangent to a sphere.}
\label{anidados}
\end{figure}
 Thus we can choose the notation such that $B\subset M_1\subset M_2$. Let $p\in \bd M_2$.  We take one the two components of $S(B,p) \cap \bd M_1$ and we denote it by $\Phi$. We are going to show that 
$\Phi$ is a planar curve contained in a plane parallel to the plane where $\Sigma(B,p)$ is contained (See Fig. \ref{anidados}). From this, it follows that $\Phi$ is a circle. Let $v\in \Phi$ and let $\Pi$ be a plane parallel to the plane of $\Sigma(B,p)$ and $v\in \Pi$. Pick an arbitrary point $w\in \Phi, w\not= v$. Let $x,y\in \Sigma(B,p)$ be points where the supporting lines $pv$ and $pw$ of $B$ touches $\bd B$, respectively. By virtue of the hypothesis, it is easy to see that $xp/xv=yp/yw=r$ (Taking supporting planes $\Gamma_x$ and $\Gamma_y$ of $B$ at $x$ and $y$, respectively, we have that the sections $ \Gamma_x \cap M_1$, $ \Gamma_x \cap M_2$ are homothetics with centre and radius of homothety $x$ and $r$, respectively, and the sections $\Gamma_y \cap M_1$, $ \Gamma_y \cap M_2$ are homothetics with centre and radius of homothety $y$ and $r$, respectively). Thus the triangles $\triangle xpy$ and $\triangle vpw$ are similar and consequently, the lines $xy$ and $vw$ are parallel. Hence $w\in \Pi$. It follows that $M_1$ is a sphere concentric with $B$; in \cite{bg} is proved a more general statement: \textit{if $K\subset \mathbb{R}^n$ is a convex body and there is a family of hyperplanes $\mathcal{F}$, which vary continuously, such that the sections defined by the hyperplanes of $\mathcal{F}$ are $(n-1)$-ellipsoid, then $K$ is an $n$-ellipsoid}.    

Since $M_1$ is a sphere all the sections of $M_2$ with planes tangent to $B$ are circles, then, $M_2$ is a sphere concentric with $B$.
\end{proof}
  
\section{Lemmas for the case $n=3$ and $r_1=r_2$.}  
In this section we will assume that $n=3$ and $r_1=r_2$. Notice that the conditions of Theorem \ref{paris} are invariant under translations, consequently, we can assume that $S_1=S_2$ and that we have a system of coordinates such that $p_1$ is the origin and $r_1=1$. Thus $S_1=\mathbb{S}^2$. For $u\in \mathbb{S}^{2}$, we denote by $K_i (u)$ the section $G(u)\cap K_i, i=1,2$. By virtue of the theorem's hypothesis there exists a map $\mu: \mathbb{S}^2 \rightarrow \textrm{T} \mathbb{S}^2$ such that $\mu(u) \cdot u=0$ and $K_2 (u)= \mu(u)+K_1(u)$, where $T \mathbb{S}^2$ is the tangent bundle of $\mathbb{S}^2$.
From the continuity of the boundaries of $K_1$ and $K_2$ it follows the continuity of $\mu(u)$. Hence, by virtue of Theorem 27.8 P. 141 of \cite{steenrod},  there exists $u_1\in \mathbb{S}^2$ such that
\begin{eqnarray}\label{melones}
K_2 (u_1)= K_1(u_1).
\end{eqnarray}
Let $\Omega \subset \mathbb{S}^2$ be the collection of vectors $u$ such that $u\in \Omega $ if 
\[
K_1(u)= K_2(u)
\]
and let $\Phi \subset \mathbb{S}^2$ the set of vectors such that 
$u\in \Phi$ if the line $G(u_1)\cap G(u)$ is supporting line of $K_1(u_1)$. The curve $\Phi$ is homeomorphic to $\mathbb{S}^1$, then, by the Jordan's Theorem,  $\mathbb{S}^2$ is decomposed by $\Phi$ in two disjoint open sets, say $A,B$ and we choose the notation such that $u_1\in A$. In order to prove the Theorem \ref{paris} first, we are going to show now that $\Phi \subset \Omega$ and, second, that  $\Omega = \mathbb{S}^2$. Therefore $K_1=K_2$. 
\begin{lemma}\label{caboe} 
The set $\Phi$ is contained in  $\Omega$. 
\end{lemma}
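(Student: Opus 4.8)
The plan is to show that $K_1(v)=K_2(v)$ for every $v\in\Phi$, equivalently that the translation vector $\mu(v)$ vanishes. For $v\in\Phi$ the set $G(u_1)\cap G(v)$ is a line, so $v\neq\pm u_1$; put $L:=G(u_1)\cap G(v)$.

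First I would prove that $L$ is a supporting line of \emph{both} sections $K_1(v)$ and $K_2(v)$ inside the plane $G(v)$, and that it meets each of them in one and the same set $F:=L\cap K_1=L\cap K_2$. Since $v\in\Phi$, $L$ supports $K_1(u_1)$ inside $G(u_1)$; and since $u_1\in\Omega$ we have $K_1(u_1)=K_2(u_1)$, so $L$ supports $K_2(u_1)$ as well. Hence, for $i=1,2$, the contact set $L\cap K_i(u_1)$ lies in the relative boundary of $K_i(u_1)$, which equals $G(u_1)\cap\bd K_i$ because $G(u_1)$ meets $\inte K_i$ (it contains $u_1\in\mathbb{S}^2\subset\inte K_i$); thus $F=L\cap K_i(u_1)=L\cap K_i\subset\bd K_i$. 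Now $L\subset G(v)$, so $L\cap K_i(v)=L\cap K_i=F\subset G(v)\cap\bd K_i=\rbd K_i(v)$, again because $G(v)$ meets $\inte K_i$. So the line $L$ meets the two--dimensional convex set $K_i(v)$ without entering its relative interior, hence $L$ supports $K_i(v)$, touching it exactly in $F$.

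Next I would pin down the direction of $\mu(v)$ and conclude. Coordinatize $G(v)$ by the signed distance $s$ to $L$, with the sign chosen so that $K_1(v)\subset\{s\ge 0\}$; then $\min_{K_1(v)}s=0$, attained on $F$. Writing $\nu$ for the corresponding unit normal and $\beta:=\mu(v)\cdot\nu$, and using that $\mu(v)\perp v$ makes $K_2(v)=\mu(v)+K_1(v)$ a translation inside $G(v)$, one gets $\min_{K_2(v)}s=\beta$. By the previous step $L$ supports $K_2(v)$, so $K_2(v)$ lies in $\{s\ge 0\}$ or in $\{s\le 0\}$; the latter is impossible, since the point $v\in\mathbb{S}^2\subset\inte K_2$ at which $\mathbb{S}^2$ touches $G(v)$ belongs to $K_2(v)$ while $s(v)>0$ (indeed $v\in K_1(v)\subset\{s\ge 0\}$ and $v\notin L$). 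Hence $K_2(v)\subset\{s\ge 0\}$, and since $F\subset L\cap K_2(v)$ this forces $\min_{K_2(v)}s=0$, i.e.\ $\beta=0$: the vector $\mu(v)$ is parallel to $L$. If in addition $\mu(v)\neq 0$, this parallelism yields $L\cap K_1(v)=\bigl(L\cap K_2(v)\bigr)-\mu(v)=F-\mu(v)$, contradicting $L\cap K_1(v)=F$ (a nonzero translate of the nonempty compact set $F$ cannot coincide with $F$). Therefore $\mu(v)=0$, i.e.\ $K_1(v)=K_2(v)$, so $v\in\Omega$.

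I expect the sticking point to be showing that $L$ supports $K_2(v)$ \emph{on the same side} as $K_1(v)$: a priori the translate $K_2(v)$ could touch $L$ from the opposite side, which would permit $\mu(v)$ a nonzero component normal to $L$ and break the argument. The feature that saves it is the special position of the tangency point $v$ --- it lies in both sections and strictly off $L$, which forces $K_1(v)$ and $K_2(v)$ to the same side of $L$. Everything else is routine convex geometry: the supporting--hyperplane theorem, and the identification of the relative boundary of a plane section of $K_i$ with the trace of $\bd K_i$ on the cutting plane.
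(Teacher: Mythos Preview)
Your proof is correct and follows essentially the same line as the paper's: both establish that $L=G(u_1)\cap G(v)$ supports $K_1(v)$ and $K_2(v)$ on the same side with identical contact set $L\cap K_1=L\cap K_2$, and then conclude that the translation vector $\mu(v)$ must vanish. The only cosmetic differences are that you place the two sections on the same side of $L$ via the tangency point $v\in K_1(v)\cap K_2(v)\setminus L$ (the paper instead invokes three-dimensional supporting planes $\Psi_i\supset L$ with $\mathbb{S}^2\subset\Psi_i^+$), and you spell out the final rigidity step that the paper simply asserts.
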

\begin{proof}
Let $u\in \Phi$, i.e., $L(u):=G(u_1)\cap G(u)$ is supporting line of $K_1(u_1)$. We are going to show that the sections $K_1(u)$ and $K_2(u)$ coincide. We claim that $K_1(u)$ and $K_2(u)$ are contained in the same half plane of $G(u)$ of the two determined by $L(u)$. Let $\Psi_i$ be a supporting planes of $K_i$  containing $L(u)$, $i=1,2$. We denote by $\Psi^{+}_i$, the  supporting half spaces determined by $\Psi_i$, $i=1,2$. It is clear that $\mathbb{S}^2 \subset \Psi^{+}_1 \cap \Psi^{+}_2$, since we are assuming that $\mathbb{S}^2 \subset K_1  \cap K_2$. Hence $K_1(u)$, $ K_2(u) \subset \Psi^{+}_1 \cap \Psi^{+}_2$ and, consequently, $K_1(u)$, $K_2(u)$ are contained in the same half plane of $G(u)$ of the two determined by $L(u)$. On the other hand, we denote by $S$ the intersection $L(u) \cap   K_1$. Observe that, since (\ref{melones}) holds, $S$ is equal to $L(u) \cap   K_2$. By hypothesis the sections $K_1(u)$ and $ K_2(u)$ are translated, by virtue that they have a common supporting line, the set $S$ in common and they are contained in the same half plane determined by $L(u)$, they coincide.
\end{proof}  
Let $u,v\in \Phi$ such that $L(u):=G(u)\cap G(u_1)$ and $L(v):=G(v)\cap G(u_1)$ are parallel supporting lines of $K_1(u_1)$. For $u\in \Phi$, we define the following subset of $\bd K_1$   
\[
S(u):=[E(u)\cap  F(v) \cap E(u_1)]\cap \Sigma(K_1,L(u)).
\]  
Now we consider the union of the sets $S(u)$ for $u\in \Phi$, i.e.,
\begin{eqnarray}\label{ana}
\Sigma:=\bigcup_{u\in \Phi} S(u).
\end{eqnarray}    
\begin{figure}[h]
\hspace{1cm}
\includegraphics[scale=1.3]{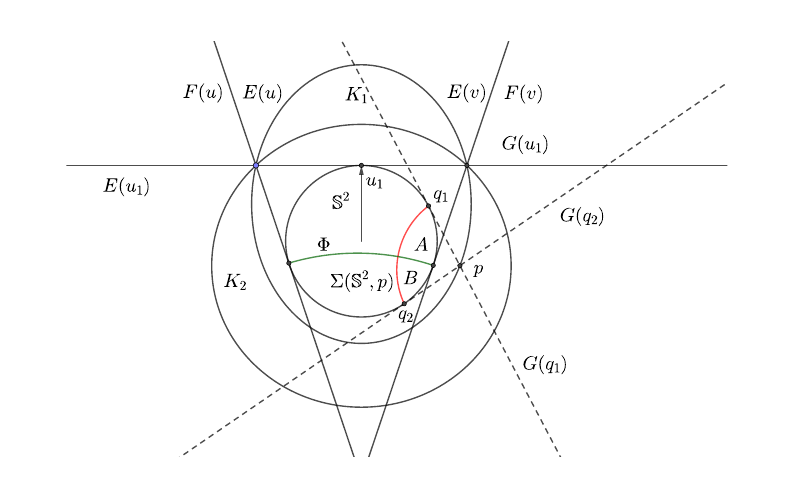}
\caption{The set $\Sigma$ is contained in $\bd K_2$.}
\label{sacre}
\end{figure}
Let $k$ be an integer, $1\leq k \leq n-1$. An \textit{embedding} of $\mathbb{S}^{k}$ in $\Rn$ is a map $\alpha: \mathbb{S}^{k} \rightarrow \Rn$ such that $\alpha$ is homeomorphism onto its image.
\begin{lemma}\label{sara}
The set $\Sigma$ is contained in $\bd K_2$.
\end{lemma}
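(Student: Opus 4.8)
The plan is to prove that every point of $\Sigma$ lies on a tangent section $G(w)\cap K_1$ with $w\in\Phi$, and then to combine Lemma \ref{caboe} with the elementary fact that a hyperplane through an interior point of a convex body meets its boundary exactly in the relative boundary of the corresponding section.

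First I would fix a point $x\in\Sigma$. By (\ref{ana}) there is some $u\in\Phi$, with associated opposite parallel supporting line $L(v)$ of $K_1(u_1)$ and corresponding $v\in\Phi$, such that $x\in S(u)$; in particular $x\in\Sigma(K_1,L(u))\subset\bd K_1$ and $x\in E(u)\cap F(v)$. Recalling that for $w\in\mathbb S^2$ one has $G(w)=\{y:\langle y,w\rangle=1\}$, $E(w)=\{y:\langle y,w\rangle\le1\}$ and $F(w)=\{y:\langle y,w\rangle\ge1\}$, the memberships $x\in E(u)$ and $x\in F(v)$ say precisely that $\langle x,u\rangle\le1\le\langle x,v\rangle$; moreover both contact points $u$ and $v$ of these tangent planes of $\mathbb S^2$ belong to $\Phi$.

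The key step is then to locate $w\in\Phi$ with $\langle x,w\rangle=1$, that is, $x\in G(w)$. The function $w\mapsto\langle x,w\rangle$ is continuous on $\Phi$, and $\Phi$ is connected since it is homeomorphic to $\mathbb S^1$; as this function is $\le1$ at $u\in\Phi$ and $\ge1$ at $v\in\Phi$, it assumes the value $1$ at some $w\in\Phi$ (one simply takes $w=u$ or $w=v$ if $\langle x,u\rangle=1$ or $\langle x,v\rangle=1$). Once such a $w$ is in hand the proof closes quickly: $w\in\mathbb S^2=S_1=S_2\subset\inte K_1\cap\inte K_2$ and $w\in G(w)$, so $G(w)$ meets $\inte K_i$, whence $G(w)\cap\bd K_i=\rbd K_i(w)$ for $i=1,2$; by Lemma \ref{caboe}, $w\in\Phi\subset\Omega$, so $K_1(w)=K_2(w)$ and hence $\rbd K_1(w)=\rbd K_2(w)$; consequently $x\in\bd K_1\cap G(w)=\rbd K_1(w)=\rbd K_2(w)\subset\bd K_2$, as desired.

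The main thing to verify with care is the geometric bookkeeping in the second paragraph: that the region $E(u)\cap F(v)\cap E(u_1)$ entering the definition of $S(u)$ genuinely traps $x$ between the two tangent planes of $\mathbb S^2$ whose contact points lie in $\Phi$, since this is exactly what makes the connectedness argument applicable. The conditions $x\in E(u_1)$ and $x\in\Sigma(K_1,L(u))$ are not needed for the present inclusion — for that the decisive data are only $u,v\in\Phi$ and $\langle x,u\rangle\le1\le\langle x,v\rangle$ — but they are imposed in order to give $\Sigma$ the topological structure used in the later lemmas.
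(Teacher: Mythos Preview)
Your proof is correct and takes a cleaner route than the paper's. Both arguments reduce to finding some $w\in\Phi$ with $x\in G(w)$, after which Lemma~\ref{caboe} closes the proof exactly as you describe. The paper reaches this $w$ more indirectly: it considers the graze circle $\Sigma(\mathbb S^2,x)$, picks a supporting line $L$ of $K_1$ through $x$ parallel to $L(u)$, takes the two tangent planes $G(q_1),G(q_2)$ of $\mathbb S^2$ through $L$ with $q_1\in A$ and $q_2\in B$, and then uses that $\Phi$ separates $A$ from $B$ to force each arc of $\Sigma(\mathbb S^2,x)$ between $q_1$ and $q_2$ to meet $\Phi$. Your intermediate-value argument on $\Phi$ itself --- using that $\langle x,\cdot\rangle$ is $\le 1$ at $u\in\Phi$ and $\ge 1$ at $v\in\Phi$ --- bypasses the graze construction entirely and is more transparent; the paper's detour through $A$ and $B$ yields two such $w$ but only one is actually used.

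One small quibble: your closing remark that ``$x\in\Sigma(K_1,L(u))$ is not needed for the present inclusion'' is misleading. That membership is precisely what supplies $x\in\bd K_1$, without which the final step $x\in\bd K_1\cap G(w)=\rbd K_1(w)$ would fail. You do invoke $x\in\bd K_1$ correctly earlier in the argument, so the proof stands; just drop or rephrase that sentence.
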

\begin{proof} 
Let $p$ be a point in $\Sigma$. Then there are parallel supporting lines $L(u),L(v)$ of $K_1(u_1)$ such that 
$p\in  S(u)$ for some $u\in \Phi$, i.e.,  there exists $u\in \Phi$ such that $p\in [E(u)\cap  F(v) \cap E(u_1)]\cap \Sigma(K_1,L(u))$. First we assume that $p$ is not in $G(-u_1)$. We denote by $D$ the central projection of $\mathbb{S}^2$ onto the plane $G(u_1)$ from the point $p$, i.e., $D:=S(\mathbb{S}^2,p)\cap G(u_1)$. By virtue of the choice of $p$, it follows that $\Phi \cap \Sigma(\mathbb{S}^2, p)\not= \emptyset$. Since $p\in [E(u)\cap  F(v) \cap E(u_1)]\cap \Sigma(K_1,L(u))$ there exists a supporting line $L\subset E(u)\cap  F(v) \cap E(u_1)$ of $K_1$ through $p$ and parallel to $L(u)$. Let $G(q_1), G(q_2) $ be the two supporting planes  of $\mathbb{S}^2$ containing $L$. Changing the notation if necessary, we can assume that $q_1\in A$ and $q_2\in B$ (See Fig. \ref{sacre}). Therefore in each one of the arcs of $\Sigma(\mathbb{S}^2,p)$ determined by $q_1,q_2$ there is a point in $\Phi$, say $w_1,w_2$. Consequently, the figures $D$ and $K_1(u_1)$ have two common supporting lines, say $L(w_1):=G(w_1)\cap G(u_1), L(w_2):=G(w_2)\cap G(u_1)$. By Lemma \ref{caboe}, $K_i(w_i)= K_2 (w_i), i=1,2$. Hence $p\in \bd K_2$. On the other hand, if $p\in E(-u_1)$, from the previous case and since the boundaries of $K_1,K_2$ are closed it follows that $p\in K_2$. 
\end{proof}
\begin{lemma}\label{parcera}
Let $M_1,M_2\subset \Rd$ be two translated convex bodies and let $a,b,c,d$ be points which belongs to $\bd M_1\cap \bd M_2$. If the quadrilateral $\square abcd$ is not a trapezoid, then $\bd M_1=\bd M_2$. On the other hand, if the quadrilateral $\square abcd$ is a trapezoid and $\bd M_1\not= \bd M_2$, then the pair of parallel edges of $\square abcd$ is contained in $\bd M_1$ and $\bd M_2$.
\end{lemma}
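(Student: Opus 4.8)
\emph{Plan.} We may assume $M_2=M_1+u$ for some $u\in\Rd$. If $u=0$ then $\bd M_1=\bd M_2$ and there is nothing to prove, so suppose $u\ne 0$; then $M_1\ne M_2$, hence $\bd M_1\ne\bd M_2$, and it suffices to show that $\square abcd$ is a trapezoid and that its pair of parallel edges lies in $\bd M_1\cap\bd M_2$. (Throughout we read ``$\square abcd$'' as a genuine quadrilateral, i.e.\ $a,b,c,d$ in convex position.) Fix coordinates in which $u$ is horizontal (pointing in the positive first direction), let $[t_0,t_1]$ be the range of the vertical coordinate over $M_1$, and for $t\in[t_0,t_1]$ let $w(t)$ be the length of the horizontal section $M_1\cap\{\text{height }t\}$; by a standard fact $w$ is concave on $[t_0,t_1]$, and $w(t)>0$ for $t\in(t_0,t_1)$.

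The first step is to describe $\bd M_1\cap\bd M_2=\bd M_1\cap(\bd M_1+u)$: a point $p$ lies in it iff both $p$ and $p-u$ lie in $\bd M_1$. If $p=(x,t)$ with $t\in(t_0,t_1)$, the only boundary points of $M_1$ at height $t$ are the two endpoints of the horizontal section, so (as $u\ne 0$) necessarily $x$ is the right endpoint and $w(t)=\|u\|$; call the set of these points $\Gamma$, a subset of the ``right'' boundary arc of $M_1$. At the extreme heights the whole horizontal edges $E^{+}:=M_1\cap\{\text{height }t_1\}$ and $E^{-}:=M_1\cap\{\text{height }t_0\}$ (segments, possibly degenerate) lie in $\bd M_1$, and a direct computation gives that the part of $E^{\pm}$ lying in $\bd M_1\cap\bd M_2$ is a subsegment $\Delta^{\pm}\subseteq E^{\pm}$, parallel to $u$, of length $\len(E^{\pm})-\|u\|$ when $\len(E^{\pm})\ge\|u\|$ and empty otherwise. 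Hence $\bd M_1\cap\bd M_2=\Gamma\cup\Delta^{+}\cup\Delta^{-}$.

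Next I record two consequences of the concavity of $w$: (i) the level set $\{w=\|u\|\}$ is empty, a single point, a two-point set, or --- only when $\|u\|=\max w$ --- a closed interval; in the interval case the right boundary function $g$ (concave) equals $f+\|u\|$ there, with $f$ (left boundary) convex, so the common function is simultaneously concave and convex, hence \emph{affine}, and therefore $\Gamma$ lies on a line; (ii) if $w$ attains the value $\|u\|$ at an interior height while $w\ge\|u\|$ throughout $[t_0,t_1]$, then $w\equiv\|u\|$, in which case $M_1$ is a parallelogram and $\bd M_1\cap\bd M_2$ degenerates to its single ``right'' side. By (i) each of $\Gamma$, $\Delta^{+}$, $\Delta^{-}$ lies on a line (for $\Gamma$: it is at most two points or a segment), so convex position forces at most two of $a,b,c,d$ in each of the three pieces. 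A short case analysis --- using (i)--(ii) together with the values $w(t_0),w(t_1)$ that are forced by how many of the four points sit in $\Delta^{-},\Delta^{+}$ --- shows that if one of $a,b,c,d$ lies in $\Gamma$, then $a,b,c,d$ cannot be in convex position (in each subcase either three of them are forced to be collinear --- possibly because $M_1$ is the degenerate parallelogram of (ii), whose $\bd M_1\cap\bd M_2$ is a single segment --- or there are simply fewer than four distinct points available).

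Therefore none of $a,b,c,d$ lies in $\Gamma$, so exactly two of them lie in $\Delta^{+}\subseteq E^{+}$ and two in $\Delta^{-}\subseteq E^{-}$; in particular $E^{+}$ and $E^{-}$ are honest edges of $M_1$ of length $>\|u\|$, both parallel to $u$. The segment joining the two points on $\Delta^{+}$ is contained in $E^{+}\subseteq\bd M_1$, and translated by $-u$ it still lies inside $E^{+}$, hence it also lies in $\bd M_2$; likewise for $\Delta^{-}$. Since $E^{+}\parallel E^{-}$, the quadrilateral $\square abcd$ is a trapezoid whose pair of parallel edges consists exactly of these two segments, both contained in $\bd M_1\cap\bd M_2$, as required. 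The step I expect to be the main obstacle is the case analysis in the last paragraph --- in particular eliminating the configurations in which some of the four points sit on the interior-height set $\Gamma$, and isolating the degenerate parallelogram case.
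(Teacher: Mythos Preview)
Your argument is correct, and it takes a genuinely different route from the paper's.

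The paper argues directly with the translation vector: writing $M_1=w+M_2$, it first shows that if a direction $u$ is not parallel to any edge of $\square abcd$, then two opposite vertices have $u$-rays entering $\inte(\square abcd)\subset\inte M_1\cap\inte M_2$, which forces $w\not\parallel u$. Hence $w$ is parallel to some edge, say $[a,b]$. If $[c,d]\not\parallel[a,b]$ (the non-trapezoid case), a second application of the same ray argument at $d$ produces a contradiction unless $w=0$. In the trapezoid case $[a,b]\parallel[c,d]$, the eight points $a,b,c,d,a+w,\dots,d+w$ all lie in $\bd M_1$ and split into four collinear points on each of $L(a,b)$ and $L(c,d)$, so both segments $[a,b]$ and $[c,d]$ lie in $\bd M_1$ (and similarly in $\bd M_2$).

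Your approach instead gives a complete structural description of $\bd M_1\cap\bd M_2$ via the concavity of the width function $w(t)$: the intersection decomposes as $\Gamma\cup\Delta^{+}\cup\Delta^{-}$, and a case analysis on the level set $\{w=\|u\|\}$ shows that four points in convex position can never use $\Gamma$, forcing two points onto each of the horizontal edges $E^{\pm}$. Your case analysis does go through (the key observations being that $|\Gamma|=2$ forces both $\Delta^{\pm}=\emptyset$, that $|\Gamma|=1$ forces at least one $\Delta^{\pm}=\emptyset$, and that the interval case collapses everything onto a line), though it is longer than you suggest and should be written out in full. What you gain is a complete picture of $\bd M_1\cap\bd M_2$; what the paper's argument gains is brevity and a proof that never leaves the four given points. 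Both establish the lemma, including the trapezoid clause, with the same mild ambiguity in the parallelogram subcase (only the pair of edges parallel to the translation direction is shown to lie in the boundaries).
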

\begin{proof}
Since the convex figures $M_1,M_2\subset \Rd$ are translated there exists $w\in \Rd$ such that $M_1=w+M_2$.

Let $abcd \subset \mathbb{R}^2$ be a quadrilateral which is not a trapezoid.
Let $u\in \mathbb{S}^1$ be a unit vector, not parallel to an edge of $abcd$. Then there exists two vertices of $abcd$, say $b,d$,  such that 
\begin{eqnarray}\label{cinthya}
[b+r^+(u)]\cap \inte (abcd)\not= \emptyset \textrm{ }\textrm{and } \textrm{ }[d+r^{-}(u)]\cap \inte (abcd)\not= \emptyset
\end{eqnarray}
where $r^+(u):=\{\alpha u: \alpha \in \mathbb{R}, \alpha>0 \}$ and $r^{-}(u):=\{\alpha u: \alpha \in \mathbb{R}, \alpha<0 \}$ are the rays defined by $u$. 

Since $\inte (abcd) \subset \inte M_1 \cap \inte K_2$, the relation (\ref{cinthya}) implies that $w$ is not parallel to $u$. O\-ther\-wise, let us assume that $w$ is parallel to $u$. Then either $a,a+w,c,c+w,d,b+w\in \bd K_1$ or $a,a+w,c,c+w,b,d+w\in \bd K_1$. In the first case, by (\ref{cinthya}), it follows that $b\in \inte K_1$ which contradicts that $b\in \bd K_1$. In the other case,  $d\in \inte K_1$ which contradicts that $d\in \bd K_1$.

By the previous paragraph, we know that $w$ is parallel to an edge of $abcd$, say $w$ is parallel to the edge $[a,b]$. If $abcd$ is not a trapezoid, the edge $[c,d]$ is not parallel to $[a,b]$. Let $L$ be a supporting line of $K_1$ parallel to $w$. Then 
\begin{eqnarray}\label{valentina}
[d+L(u)]\cap \inte (abcd)\not= \emptyset \textrm{ } 
\end{eqnarray}
where $L(u):=\{\alpha u: \alpha \in \mathbb{R}\}$. By the relation $K_1=w+K_2$ it is clear that  $a,b,c,d,a+w,b+w,c+w,d+w\in \bd K_1$. On the other hand, by (\ref{valentina}) either $d+w\in \inte K_1$, if $\langle u,  w\rangle>0$ or $d\in \inte K_1$, if $\langle u,  w\rangle<0$. In order to avoid the contradiction we must have $w=0$ and, consequently, $K_1=K_2$.

If $abcd$ is a trapezoid, say the edge $[c,d]$ is  parallel to $[a,b]$, since $a,b,c,d,a+w,b+w,c+w,d+w\in \bd K_1$ and by virtue that $a,a+w,b, b+w\in L(a,b)$ and $c,c+w,d,d+w\in L(c,d)$, the lines $L(a,b)$ and $L(c,d)$ are supporting lines of $K_1$. Analogously we can see that 
the lines $L(a,b)$ and $L(c,d)$ are supporting lines of $K_2$.
Consequently, $[ab]$, $[cd]$ are contained in $\bd M_1$ and $\bd M_2$. 
\end{proof}
\begin{lemma}\label{selene}
The set $A$ is contained in  $\Omega$. 
\end{lemma}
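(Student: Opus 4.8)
The strategy is to argue by contradiction: suppose there is a point $u_0 \in A$ with $K_1(u_0) \neq K_2(u_0)$, and derive a contradiction using Lemma \ref{sara} together with Lemma \ref{parcera}. First I would recall what we already have: $\Phi \subset \Omega$ by Lemma \ref{caboe}, and the set $\Sigma = \bigcup_{u \in \Phi} S(u)$ from \eqref{ana} lies in $\bd K_2$ by Lemma \ref{sara}, while by construction it also lies in $\bd K_1$; thus $\Sigma \subset \bd K_1 \cap \bd K_2$. The point of $\Sigma$ is that it is a "large" subset of $\bd K_1$ — swept out by the shadow boundaries $\Sigma(K_1, L(u))$ as $L(u)$ ranges over the supporting lines of $K_1(u_1)$ that are chords of $\mathbb{S}^2$ — and in particular, for a generic plane $G(u_0)$ with $u_0 \in A$, the section $K_1(u_0)$ meets $\Sigma$ in several points (at least four, in sufficiently general position).

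Concretely, I would fix $u_0 \in A$ and look at how $G(u_0)$ intersects the pieces $S(u)$. For each direction of parallel supporting lines $L(u), L(v)$ of $K_1(u_1)$, the shadow boundary $\Sigma(K_1, L(u))$ is a closed curve on $\bd K_1$ whose two arcs (cut off by the region constraints $E(u) \cap F(v) \cap E(u_1)$) contribute to $\Sigma$; as $u$ varies over $\Phi$, these arcs sweep a region of $\bd K_1$ that I claim contains enough of $\bd K_1 \cap E(u_1)$ to force $K_1(u_0) \cap \Sigma$ to have at least four points. Since those points lie in $\bd K_1 \cap \bd K_2$ and $K_1(u_0), K_2(u_0)$ are translates (hypothesis of the theorem, via $\mu(u_0)$), Lemma \ref{parcera} applies to any four such points $a,b,c,d$: if they do not form a trapezoid, then $\bd K_1(u_0) = \bd K_2(u_0)$ and we are done (i.e. $u_0 \in \Omega$, contradiction). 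So the remaining case is that \emph{every} quadruple of common boundary points forms a trapezoid — which, if we can produce enough points in suitably generic position, is impossible for convex curves unless the curves already coincide; here I would use the freedom to perturb $u_0$ within $A$, or to choose the four points among the continuum available in $\Sigma \cap K_1(u_0)$, to rule out the degenerate trapezoidal configuration, invoking the second clause of Lemma \ref{parcera} to pin down the parallel edges and then iterating.

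The main obstacle I expect is the bookkeeping showing that $\Sigma \cap K_1(u_0)$ genuinely contains four points in general enough position for $u_0$ ranging over \emph{all} of $A$ — not merely near $u_1$. This is a visibility/covering argument about which points of $\bd K_1$ are reached by chords of $\mathbb{S}^2$ lying in $G(u_1)$: one must check that the union over $u \in \Phi$ of the constrained shadow boundaries $S(u)$ covers a full two-parameter patch of $\bd K_1$ meeting each relevant section $K_1(u_0)$ transversally in several arcs. I would handle this by a continuity/connectedness argument: start from $u_1$ where $K_1(u_1) = K_2(u_1)$, so the whole section is common, and propagate equality across $A$ by showing the set of $u \in A$ with $K_1(u) = K_2(u)$ is both open and closed in $A$ (closedness from continuity of the boundaries; openness from the trapezoid dichotomy of Lemma \ref{parcera} applied to four common points that persist under small perturbations, the common points being supplied by $\Sigma$). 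Once $A \subset \Omega$ is established, together with $\Phi \subset \Omega$ and the analogous statement for $B$, one gets $\Omega = \mathbb{S}^2$ and hence $K_1 = K_2$.
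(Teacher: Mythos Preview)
Your skeleton is right --- produce common boundary points of $K_1(u)$ and $K_2(u)$ and feed them to Lemma \ref{parcera} --- but the way you source those points and the way you dispose of the trapezoid case both miss how the argument actually closes. In the paper the four points $a,b,c,d$ do \emph{not} come from $\Sigma$: they are the endpoints of the two chords $G(u_1)\cap K_1(u)$ and $G(q)\cap K_1(u)$, which lie in $\bd K_1\cap\bd K_2$ simply because $K_1(u_1)=K_2(u_1)$ and $q\in\Phi\subset\Omega$. The role of $\Sigma$ is only to supply a \emph{fifth} point $x$ (the tangency point of a supporting line of $K_1(u)$ parallel to $L(u)$, on the $E(u_1)$ side), and that fifth point is what kills the trapezoid case: once Lemma \ref{parcera} forces the parallel edges to be $[a,c]$ and $[b,d]$ (the chords $[a,b],[c,d]$ cannot sit on $\bd K_1(u)$ since their interiors lie in $\inte K_1(u)$), one uses the supporting line $N$ of $K_1(u)$ at $x$ together with $x\in\bd K_2(u)$ to show the translation vector $\alpha$ satisfies $w\cdot\alpha=0$, which is incompatible with the parallel edges unless $\alpha=0$. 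Your ``perturb $u_0$'' or ``pick another quadruple'' suggestion does not replace this step.

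The more serious gap is the case split you do not see. The fifth point $x$ belongs to $\Sigma$ only when the two tangent planes $G(p),G(q)$ through the supporting lines $L(p),L(q)$ of $K_1(u_1)$ parallel to $L(u)$ satisfy a visibility condition (roughly, $G(u)$ meets the relevant piece of the shadow boundary). When this fails --- the paper's relation (\ref{gitano}) --- there may be \emph{no} point of $\Sigma$ on $G(u)$ at all, and a genuinely different argument is needed: one passes to the opposite supporting plane $G(v)$ of $K_1(u)$ parallel to $L(u)$, shows by a planar projection/inscribed-circle picture that for all $w$ in a suitable arc one has $G(w)$ meeting the shadow boundary, deduces $w\in\Omega$ for those $w$, and concludes that an entire arc $\bd K_1(u)\cap F(u_1)$ lies in $\bd K_2$, which forces the translated sections to coincide. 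Your ``covering'' worry is exactly this, and the open--closed patch you propose does not resolve it: openness of $\{u\in A:K_1(u)=K_2(u)\}$ has no obvious mechanism, since knowing $K_1(u_0)=K_2(u_0)$ gives you common points only on $G(u_0)$, not on nearby $G(u)$.
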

\begin{figure}[h]
\hspace{1cm}
\includegraphics[scale=1.3]{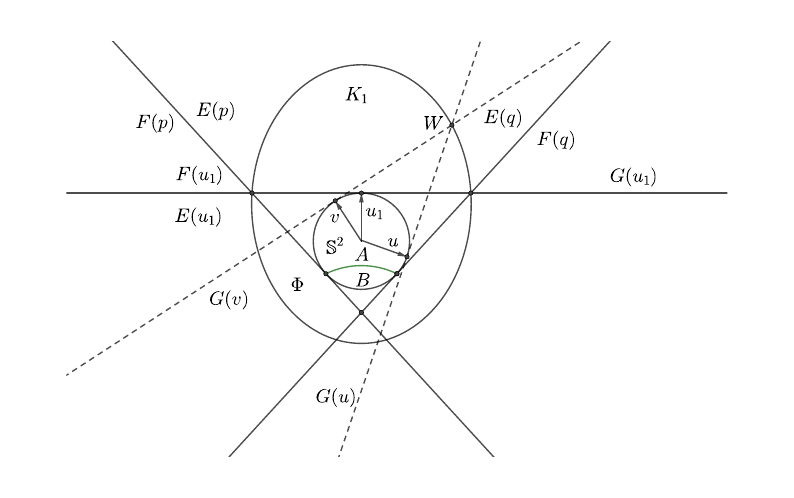}
\caption{The set $A$ is contained in  $\Omega$.}
\label{mariana}
\end{figure}
\begin{proof} 
Let $u\in A$. Then the line $L(u):=G(u)\cap G(u_1)$ intersects the interior of $K_1(u_1)$ (the vectors $u$ which correspond to the lines $L(u)$ which are supporting lines of $K_1(u_1)$ are those in $\Phi$, on the other hand, if $u\in B$, the line $L(u)$ does not intersect $K_1(u_1)$). Let $L(p),L(q)\subset G(u_1)$ be a pair of supporting lines of $K_1(u_1)$, parallel to $L(u)$, such that $L(p)\subset E(u)$ and $L(v)\subset F(u)$ and let $M$ be a supporting line of $G(u)\cap \bd K_1$ parallel to $L(u)$ and contained in $E(u_1)$. We pick a point $x\in M\cap \bd K_1$. 

First we suppose that $G(p) \cap  G(q)\cap \inte K_1=\emptyset$. It follows that $x\in \Sigma$. Let $a,b$ be the extreme points of the line segment $G(u_1)\cap K_1(u)$ and let $c,d$ be the extreme points of the line segment $G(q)\cap K_1(u)$. Since $K_1(u_1)=K_2(u_1)$ and $K_1(q)=K_2(q)$,  notice that $a,b,c,d\in \bd K_1\cap \bd K_2$. If the quadrilateral $\square abcd$ is not a trapezoid, then, by Lemma \ref{parcera},   
$K_1(u)=K_2(u)$. Thus $u\in \Omega$.

If the quadrilateral $\square abcd$ is a trapezoid, by virtue that the segments $[a,b]$, $[c,d]$ cannot belong to the boundary of $K_1(u)$, the segments $[a,c]$, $[b,d]$ are parallel. On the other hand, by virtue that $x\in \Sigma$, it follows that $x\in \bd K_1 \cap \bd K_2$ and there exists a supporting line $N$ of $K_1(u)$ parallel to $L(u)$. Let $w$ be the outer normal vector of the supporting half-plane of $K_1(u)$ defined by $N$. Since $K_1(u)$ and $K_2(u)$ are translated there exists $\alpha \in \Rt$ such that $K_1(u)=\alpha+K_2(u)$. By virtue that $N$ is supporting line of $K_1(u)$ at $x$ and since $x\in \bd K_2(u)$ it follows that $w\cdot \alpha \leq 0$. Since $N$ is supporting line of  $K_1(u)$ at $x$, $N-\alpha$ is supporting line of $K_2(u)$ at $x-\alpha$ but, since the inequality   $w\cdot \alpha \leq 0$ holds, the line $N-\alpha$ separates the point $x$ from the section $K_2(u)$ if $\alpha\not=0$ and $w\cdot \alpha<0$. Thus  if $\alpha\not=0$, then $w\cdot \alpha=0$, but since $[a,c]$ and $[c,d]$ belongs simultaneously to $\bd K_1(u)$ and $\bd K_2(u)$ the condition $w\cdot \alpha=0$ is impossible. Hence $\alpha=0$, i.e., $K_1(u)=K_2(u)$.

Now we suppose that $G(p) \cap  G(q)\cap K_1\not=\emptyset$ (See Fig. \ref{mariana}). If $u$ is such that the relation 
\[
G(u) \cap [E(p)\cap F(q) \cap E(u_1)]\cap \Sigma(K_1,L(p))\not = \emptyset
\]
holds, then we proceed as before and we get to the conclusion $u\in \Omega$. On the other hand, if $u$ is such that the relation
\begin{eqnarray}\label{gitano}
G(u) \cap [E(p)\cap F(q) \cap E(u_1)]\cap \Sigma(K_1,L(p)) = \emptyset
\end{eqnarray}
is satisfied (See Fig. \ref{mariana}), we proceed as
follows. Let $v\in \mathbb{S}^2$ such that the line $W:=G(u)\cap G(v)$ is supporting line of $K_1(u)$, parallel to $L(u)$ and $W\subset  F(u_1)$. We claim that
\begin{eqnarray}\label{luismi}
G(v) \cap [F(p) \cap E(q) \cap \Sigma(K_1,L(p))]\not=\emptyset.
\end{eqnarray} 
We consider the orthogonal projection, defined by $L(u)$, of the sets 
\[
L(p),L(q), G(p) \cap G(q), G(u_1), G(p), G(q),  G(u), G(v),\mathbb{S}^2
\]
which will be denoted, respectively, by 
\[
x,y,z,P,Q,R,S,T, J.
\]
It is clear that $J$ is inscribed in the triangle $\triangle xyz$ and the condition (\ref{gitano}) implies that $S$ intersects the edges $xy$ and $yz$ of the triangle $\triangle xyz$ (See Fig. \ref{mango}). 

Suppose that relation (\ref{luismi}) does not hold, it is equivalent to the fact that $T$ not intersect the edge $xz$ of $\triangle xyz$. Since $T$ intersects the edge $xy$ the aforesaid implies that $T$ intersects $yz$. By virtue that $G(u)$ and $G(v)$ are supporting planes of 
$\mathbb{S}^2$, $J$ is contained in the angle determined by $S$ and $T$.
Thus there is not a point of $J$ in the edge $xz$ but this contradict that $J$ is inscribed in $\triangle xyz$. This contradiction shows that relation (\ref{luismi}) holds.
\begin{figure}[h]
\hspace{1cm}
\includegraphics[scale=.8]{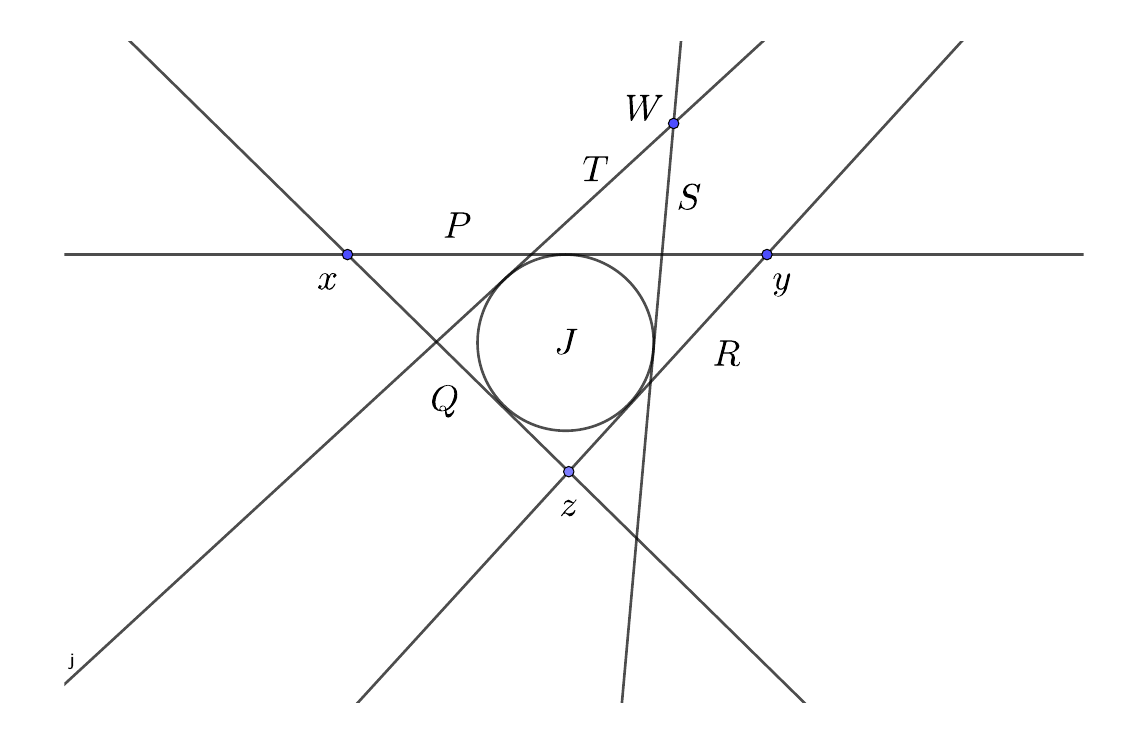}
\caption{The orthogonal projection, defined by $L(u)$, of the sets 
$L(p),L(q), G(p) \cap G(q), G(u_1), G(p), G(q),  G(u), G(v),\mathbb{S}^2$.}
\label{mango}
\end{figure}
From (\ref{luismi}) it follows that, for all $w\in (\mathbb{S}^2\cap \lin\{u_1,u\})$ such that $w\cdot u_1>v\cdot u_1$, the relation
\begin{eqnarray}\label{carrito}
G(w) \cap [F(p) \cap E(q) \cap \Sigma(K_1,L(p))]\not=\emptyset.
\end{eqnarray} 
From (\ref{carrito}),  we conclude that $w\in \Omega$ for all $w\in  (\mathbb{S}^2\cap \lin\{u_1,u\})$ such that $w\cdot u_1>v\cdot u_1$ (See the proof of Lemma \ref{sara}). Consequently, the arc 
$\bd K_1(u)\cap F(u_1)$ belongs to $\bd K_2$. In fact, since $w\in \Omega$ for all $w\in  (\mathbb{S}^2\cap \lin\{u_1,u\})$ such that $w\cdot u_1>v\cdot u_1$, by Lemma \ref{sara}, the points $G(u)\cap  \bd K_1(w)$ belong to $\bd K_2$ as well. Now varying $w\in \mathbb{S}^2$ such that $w\cdot u_1>v\cdot u_1$ we get to the aforesaid. By virtue that the translated sections $K_1(u)$, $K_2(u)$ have the arc $\bd K_1(u)\cap F(u_1)$ in common they coincide. Thus $u\in \Omega$.
\end{proof} 
Let $C$ and $D$ be two convex bodies in $\mathbb{R}^2$, $D \subset \inte C$. For every $x\in \bd C$, we define a polygonal $P_D(x)$, with respect to $C$ and $D$, in the following manner. We take a  supporting line  $L$ of $D$ passing through $x=x_1$, and we denote by $x_2$ the second intersection of $L=L_1$ with $\bd C$.  
We denote now by $L_2$ the supporting line of $C$, $L_2\not= L_1$, passing through $x_2$ and so on. The set of vertices of $P_D(x)$ is the set of points $\{x_1,x_2,x_3,...,x_k,...\}$. Therefore, the edges of $P_{D}(x)$ are  
$\{ \overrightarrow{x_1x_{2}},\overrightarrow{x_2x_{3}},...,\overrightarrow{x_ix_{i+1}},...\}$. Note that $P_D(x)$ is a piece-wise linear curve and could be a closed polygon and, in that case, it could not be convex, furthermore, the sequence $\{x_1,x_2,...\}$ does not necessarily represent a sequence of points in $C$ for which an increasing order of the indices of the sequence corresponds to a direction of travel in $C$.

Given the directed edge $\overrightarrow{x_ix_{i+1}}$, we denote by $E_{i}$ and $F_{i}$ the two half-planes determined by  $L(x_i,x_{i+1})$, we choose the notation such that $E_{i}$ is the half supporting plane of $D$. We denote by $u_i$ the interior unit normal vector of $E_{i}$, $i=1,2,...,k,...$. Now we require that the set $\{\overrightarrow{x_ix_{i+1}}, u_i\}$ is a  left frame of $\mathbb{R}^2$ (this means that if we choose an left orthogonal system of coordinates $(x,y)$ of $\mathbb{R}^2$, then we can find an isometry, which preserves the orientation, such that the images of $\overrightarrow{x_ix_{i+1}}$ and  $u_i$ are parallel to the positive direction of the axis).
\begin{lemma}\label{lizbeth}The relation
\begin{eqnarray}\label{kenia}
\bd C\subset \bigcup_{i\in I} F_{i}
\end{eqnarray}
holds, where $I$ is a countable possibly finite set.
\end{lemma}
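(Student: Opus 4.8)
We have two convex figures $C, D \subset \mathbb{R}^2$ with $D \subset \inte C$, and from a starting point $x_1 \in \bd C$ we build the polygonal $P_D(x_1)$ by the "billiard-like" rule: from $x_i$ take a supporting line $L_i$ of $D$ (distinct from $L_{i-1}$) meeting $\bd C$ again at $x_{i+1}$. Each directed edge $\overrightarrow{x_i x_{i+1}}$ determines two half-planes; $E_{i(i+1)}$ is the one supporting $D$ (so $D \subset E_{i(i+1)}$), and $F_{i(i+1)}$ is the complementary closed half-plane, i.e. the one \emph{not} containing $D$. The claim is that the union of all the $F_{i(i+1)}$ covers $\bd C$.

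**The plan.**

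The plan is to argue that the directed edges of $P_D(x)$ wind monotonically around $D$, so that the outer normals $-u_i$ of the half-planes $F_{i(i+1)}$ sweep through all directions (or else the process terminates, giving a finite $I$, which I must handle separately). First I would fix a point $z \in \bd C$ and show $z \in F_{i(i+1)}$ for some $i$. Suppose not: then $z \in \inte E_{i(i+1)}$, i.e. $z$ lies strictly on the $D$-side of every edge line $L(x_i,x_{i+1})$. Since each edge $[x_i,x_{i+1}]$ is a chord of $C$ with both endpoints on $\bd C$, the point $z$, being on $\bd C$, cannot be in the interior of $C$; but lying strictly on the $D$-side of the chord-line $L_i$ means $z$ is in the same open half-plane as $D$. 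The key geometric fact I would establish is that the chords $[x_i,x_{i+1}]$, together with the arcs of $\bd C$ they cut off on the non-$D$ side, exhaust $\bd C$: each such chord cuts $\bd C$ into two arcs, one "near $D$" and one "far", and the far arc lies in $F_{i(i+1)}$. So I must show every point of $\bd C$ lies on the far arc of at least one edge.

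**The monotonicity argument.**

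To see this I would track the supporting lines $L_i$ of $D$ by their contact direction: each $L_i$ touches $D$ at a point (or a segment), with a well-defined inner normal $u_i$. Going from $L_i$ to $L_{i+1}$, both pass through $x_{i+1} \in \bd C$ and both support $D$; the two supporting lines of $D$ through the exterior point $x_{i+1}$ are exactly $L_i$ and $L_{i+1}$, and the construction picks the "other" one, so the pair $(L_i, L_{i+1})$ are the two tangent lines from $x_{i+1}$. This forces the normal direction $u_i$ to rotate by a positive amount — the turning angle at $x_{i+1}$ equals the external angle of the cone $\text{S}(D, x_{i+1})$ — so the directions $u_i$ move monotonically (say counterclockwise) around $\mathbb{S}^1$. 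Consequently the edge lines $L(x_i,x_{i+1})$ themselves rotate monotonically, and the far half-planes $F_{i(i+1)}$ have outward normals $-u_i$ that also rotate monotonically. Either the $u_i$ complete a full turn in finitely many steps — then $I$ is finite and the finitely many $F_{i(i+1)}$ already cover $\bd C$ because their bounding lines are chords whose far arcs overlap and wrap all the way around — or infinitely many steps are needed and the $u_i$ accumulate, so the closure of $\bigcup_i \partial F_{i(i+1)}$ still sweeps every direction. In either case, for a given $z \in \bd C$, choosing $i$ so that the inner normal $u_i$ points "away from $z$ relative to $D$" (precisely: so that $z$ is separated from $D$ by $L_i$, which is possible because $z \notin D$ and as $u_i$ rotates the chord $L_i$ passes to the far side of $z$) places $z \in F_{i(i+1)}$.

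**Main obstacle.**

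The main obstacle I anticipate is the \textbf{bookkeeping at the first and last steps and the case of termination}: the polygonal may close up after finitely many edges (so $I$ is finite), or an edge may coincide with a supporting line of $C$ itself, or $D$ may have corners/flat bits making the contact set a segment rather than a point. Handling these degeneracies — and making rigorous the claim "as $u_i$ rotates, the chord $L_i$ eventually separates $z$ from $D$" when the rotation is through only finitely many discrete steps — is where the real care is needed; the monotone-winding idea is the easy conceptual core, but turning "the normals rotate monotonically and fill $\mathbb{S}^1$" into "every boundary point is covered" requires a short continuity/compactness argument or an explicit angular interval bookkeeping that I would carry out by indexing the $u_i$ by their polar angle and checking the union of the corresponding far-arcs of $\bd C$ has no gap.
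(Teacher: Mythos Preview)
Your monotone-winding approach is sound in outline, but the crux—the infinite case—is where the real content lies, and your treatment of it does not go through as written. You claim that if infinitely many steps are needed then ``the $u_i$ accumulate, so the closure of $\bigcup_i \partial F_{i(i+1)}$ still sweeps every direction.'' This does not follow: monotone rotation on $\mathbb{S}^1$ can perfectly well converge to a limit direction $u_\infty$ without ever completing a full turn, and then the chord lines $L_i$ sweep only a proper open arc of directions. In that scenario the far-arcs on $\bd C$ also cover only an arc, leaving a genuine gap near the accumulation point. Your ``Main obstacle'' paragraph correctly flags that making ``normals fill $\mathbb{S}^1$'' rigorous is the hard part, but you never supply an argument excluding this accumulation; angular bookkeeping alone cannot rule it out.

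The paper's proof is a three-line contradiction that targets exactly this case and bypasses all the winding analysis. Suppose some $x\in\bd C$ lies in no $F_{i(i+1)}$; then the vertices $x_i$, moving monotonically along $\bd C$, cannot pass $x$, so $x_i\to x$ and $\operatorname{length}[x_i,x_{i+1}]\to 0$. But each segment $[x_i,x_{i+1}]$ supports $D$ and hence meets $\bd D$ at some $z_i$; since $z_i\in[x_i,x_{i+1}]$ we get $z_i\to x$, so $x\in\bd D$. This contradicts $D\subset\inte C$. The key insight you are missing is that the hypothesis $D\subset\inte C$ gives a uniform positive lower bound on chord lengths of $C$ that are tangent to $D$—so accumulation is simply impossible. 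Once you see this, the whole direct winding apparatus becomes unnecessary.
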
 
\begin{proof}
Contrary to the statement of Lemma \ref{lizbeth}, we will assume that (\ref{kenia}) does not holds. It is equivalent to suppose the existence of $x\in \bd C$ such that $x_i \rightarrow x$, when $i\rightarrow \infty$, and, furthermore, $x\notin F_{1}$. Notice that $\len [x_i,x_{i+1}]\rightarrow 0$, when $i\rightarrow \infty$. Since in each 
$\overrightarrow{x_ix_{i+1}}$ there is a point $z_i$ in $\bd D$, it follows that $z_i\rightarrow x$, when $i\rightarrow \infty$. Thus $x\in D$. However this contradicts that $D\subset \inte C$.
\end{proof}

\section{Proof of Theorem \ref{paris} in dimension 3 for $r_1=r_2$.}
From Lemma \ref{selene} we conclude $F(u_1) \cap \bd K_1\subset \bd K_2$. Analogously, we can see that, for every $u\in \Omega$, the relation  
\begin{eqnarray}\label{candela}
F(u)\cap \bd K_1\subset \bd K_2
\end{eqnarray} 
holds.

Let $v\in \mathbb{S}^2 \cap H(u_1)$. Let $\mu_1,\mu_2$ be two supporting lines of $K_1(u_1)$ parallel to $v$. We consider the polygonal $P_{\mathbb{S}^2}(x_1)$ inscribed in $\pi_v(K_1)$, it was defined just before stating the Lemma \ref{lizbeth}, where $x_1=\pi_v(\mu_1)$. By Lemma \ref{lizbeth}
\[
\bd \pi_v(K_1)\subset \bigcup_{i\in I(v)} F_{i(i+1)}
\]
where $I(v)$ is a countable set of indices depending of $v$.
Hence
\begin{eqnarray}\label{grecia}
  \bd K_1\subset \bigcup_{i\in I(v)} \pi^{-1}(F_{i(i+1)})
\end{eqnarray}
By virtue that we can find $u_i\in \Omega$ such that $F(u_i)=\pi^{-1}(F_{i(i+1)})$, from (\ref{grecia}) it follows that
\begin{eqnarray}\label{miriam}
\bd K_1\subset \bigcup_{i\in I(v)} F(u_i)
\end{eqnarray}
By (\ref{candela}) and (\ref{miriam}) it follows that $\bd K_1$ and $\bd K_2$ coincides in $\bigcup_{i\in I(v)} F(u_i)$.
Varying $v\in \mathbb{S}^2\cap H(u_1)$ we conclude that $\bd K_1=\bd K_2$.
\section{Lemmas for the case $n=3$ and $r_1\not=r_2$.}  
We take a system of coordinates such that $p_2$ is the origen and $r_2=1$, i.e., $\mathbb{S}^2=S_2$. Since the hypothesis of Theorem \ref{paris} is invariant under translations, if we assume that there exists vectors $u\in \mathbb{S}^2, z\in \Rt$ such that 
\begin{itemize}
\item [(A)] $G(u)$ is supporting plane of $z+S_1$, the sphere $z+S_1$ is contained in $E(u)$ and 
 $K_2(u)=G(u) \cap (z+K_1)$,
\end{itemize}
then, since $S_2\not=z+S_1$, one of the following two conditions holds: 
\begin{itemize}
\item [(B)] $ (z+S_1)  \cap  S_2=u$ and $(z+S_1)\backslash \{u\} \subset \inte S_2$ (See Fig. \ref{celestial}),
\item [(C)] there exists a point $p\in G(u)$ such that $C(z+S_1,p)=C(S_2,p)$ (See Fig. \ref{infernal}).
\end{itemize} 
\begin{figure}[h]
\hspace{1cm}
\includegraphics[scale=.41]{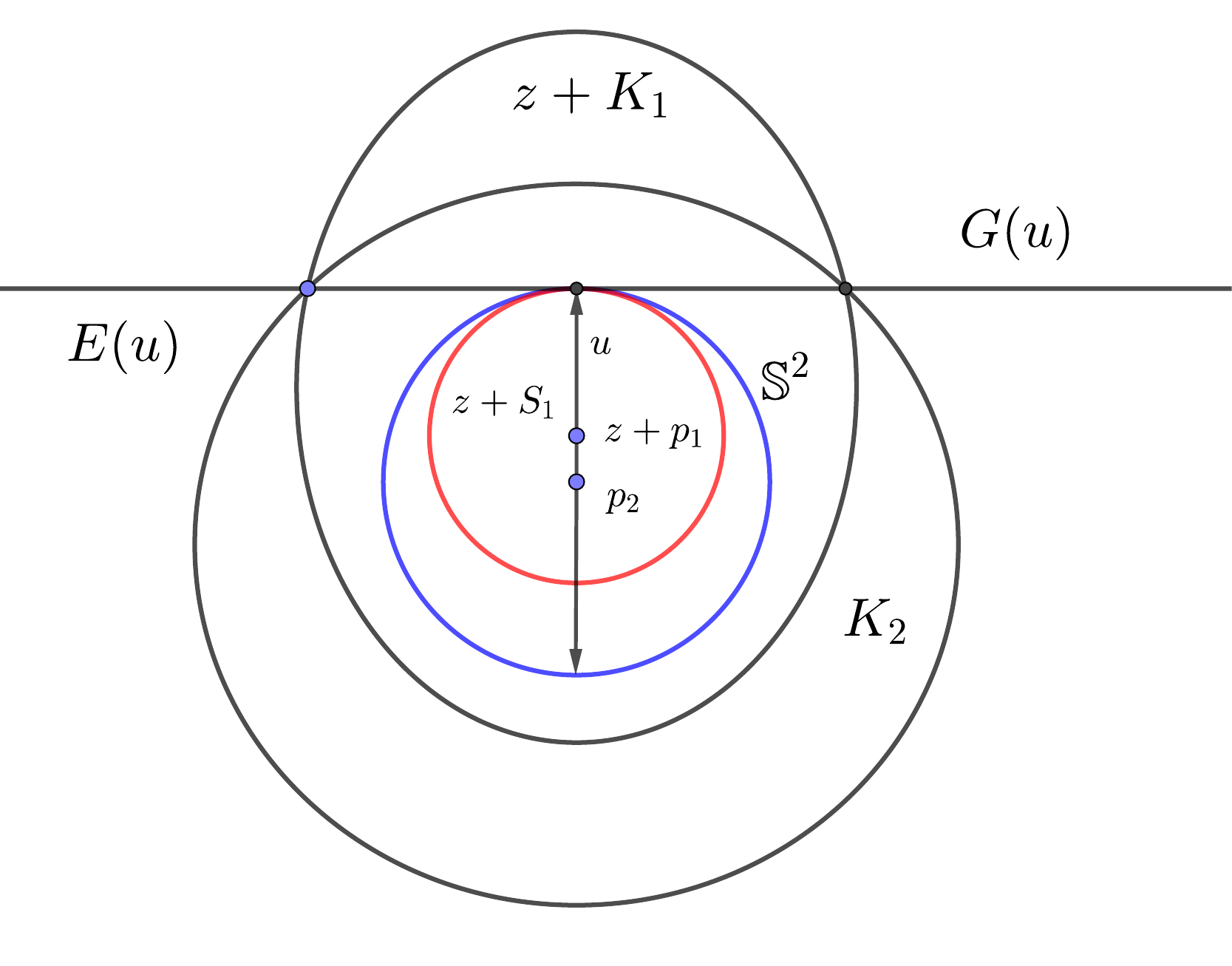}
\caption{Conditions (A) and (B).}
\label{celestial}
\end{figure}
\begin{figure}[h]
\hspace{1cm}
\includegraphics[scale=.41]{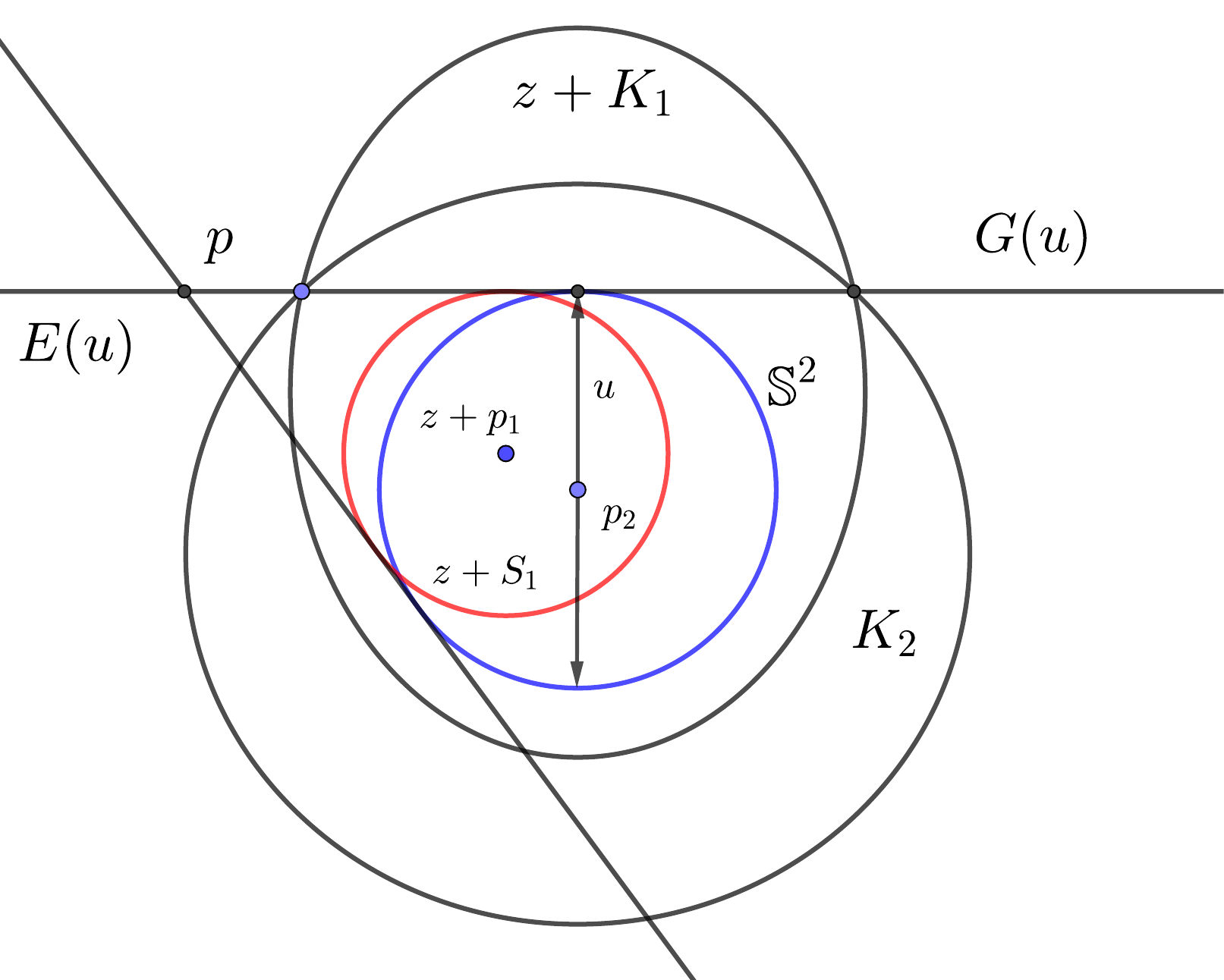}
\caption{Conditions (A) and (C).}
\label{infernal}
\end{figure}
In this section we will suppose that the bodies $K_1,K_2$, the points $p_1,p_2$ and the spheres $S_1,S_2$ are such that there exist vectors $u\in \mathbb{S}^2, z\in \Rt$ for which the conditions (A) and (C) holds. We denote the body $z+K_1$ by $K_1$ and the sphere $z+S_1$ by $S_1$.  
  
Let $x\in \Rt\backslash (S_1\cup S_2)$.  Notice that the collection of corresponding planes of the supporting planes of $C(S_2,x)$ are passing through one point, which will be denoted by $\psi(x)$, and, therefore, they are the envelope of the cone $C(S_1,\psi(x))$. We denote by $\Delta$ the family of supporting planes of $C(S_2,p)$ and by $\Sigma\subset \mathbb{S}^2$ the set of unit vectors such that $u\in  \Sigma$ if $G(u)\in \Delta$. For $u\in \Sigma$, we denote by $K_i (u)$ the section $G(u)\cap K_i, i=1,2$. 

\begin{lemma}\label{chicago}
There is no supporting plane $\Pi $ of $K_1,K_2$ such that $\Pi\notin \Delta$ and \\$\dims[\Pi \cap (K_1\cap K_2)]=2$. 
\end{lemma}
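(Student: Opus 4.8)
The plan is a proof by contradiction. Suppose $\Pi$ is a supporting plane of both $K_1$ and $K_2$ with $\Pi\notin\Delta$ and $D:=\Pi\cap(K_1\cap K_2)$ two-dimensional, and let $u\in\mathbb{S}^2$ be the outer unit normal of $\Pi$ with respect to $K_1$. A first reduction: $K_1$ and $K_2$ must lie in the \emph{same} closed half-space bounded by $\Pi$. Otherwise $K_1\cap K_2\subset\Pi$, and then the two-dimensional common section $\sigma:=G(u_0)\cap K_1=G(u_0)\cap K_2$ furnished by condition (A) --- where, by condition (C), the plane $G(u_0)\in\Delta$ is tangent to $S_1$ and passes through $p$ --- would satisfy $\sigma\subset K_1\cap K_2\subset\Pi$, so $G(u_0)\cap K_1\subset\Pi$, a line (as $G(u_0)\ne\Pi$); this is impossible because $G(u_0)$ is tangent to $S_1\subset\inte K_1$ and therefore meets $\inte K_1$. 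So from now on $u$ is also the outer normal of $\Pi$ for $K_2$, and since $S_i\subset\inte K_i$ the plane $\Pi$ is disjoint from both closed balls.

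Next I would extract the local rigidity carried by $\dims D=2$. With $D_i:=\Pi\cap K_i$, the sets $D_1,D_2$ and $D=D_1\cap D_2$ are two-dimensional convex sets sharing the affine hull $\Pi$, so $\rint D=\rint D_1\cap\rint D_2\ne\emptyset$. Hence for every $x\in\rint D$ the plane $\Pi$ is the \emph{unique} supporting plane of $K_1$ at $x$ and, at the same time, the unique supporting plane of $K_2$ at $x$; in particular $x\in\bd K_1\cap\bd K_2$ lies strictly outside both closed balls, and for every plane $T\ne\Pi$ through $x$ the line $T\cap\Pi$ is the only supporting line of each planar section $T\cap K_1$ and $T\cap K_2$ at $x$, i.e.\ $x$ is a regular boundary point of all of these sections.

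Now I would bring in the theorem's hypothesis via the homothety $\psi$. In the coordinates of this section ($p_2=0$, $r_2=1$, $S_2=\mathbb{S}^2$) the map $\psi$ of the Introduction is the restriction of the homothety $z\mapsto p_1+r_1z$, of ratio $r_1\ne1$ and centre $p=p_1/(1-r_1)$; it sends $S_2$ to $S_1$ and each tangent plane $T$ of $S_2$ to the corresponding tangent plane $\psi(T)$ of $S_1$, and the hypothesis says exactly that $T\cap K_2$ is a translate of $\psi(T)\cap K_1$ for every such $T$. Setting $M_1:=K_1$, $M_2:=\psi(K_2)$, one gets $S_1\subset\inte M_1\cap\inte M_2$ and: for every tangent plane $\Lambda$ of $S_1$, the sections $\Lambda\cap M_1$ and $\Lambda\cap M_2$ are homothetic of ratio $r_1$; condition (A) pins the centre of that homothety to be $p$ for the distinguished plane $\Lambda=G(u_0)$ (because $\psi$ fixes $\Lambda$ when $p\in\Lambda$).

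The crux --- and the step I expect to be the main obstacle --- is to turn the purely affine information ``the sections are translates'' at the plane $\Pi$, which is \emph{not} itself a tangent plane of either sphere, into a contradiction with $\dims D=2$. I would fix $x\in\rint D$ and look at the tangent planes $T$ of $S_2$ through $x$: they form a proper one-parameter family, their images $\psi(T)$ all pass through $\psi(x)$, and for each $T$ the translation carrying $T\cap K_2$ onto $\psi(T)\cap K_1$ has a \emph{prescribed} component along the normal of $T$ --- namely that of $\psi(x)-x$, since $x$ lies on $T$. Using the regularity of $x$ on every $T\cap K_2$ from the second paragraph, together with the fully determined translation at $G(u_0)$ coming from condition (A), I would propagate this around the tangent cone and conclude that $\psi(x)\in\bd K_1$ and that $\psi(\Pi)$ supports $K_1$ at $\psi(x)$. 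If $p\notin\Pi$ then $\psi(\Pi)$ is a plane distinct from $\Pi$ with the same outer normal $u$, and since $\Pi$ already supports $K_1$ with outer normal $u$ this contradicts the uniqueness of the supporting plane with a prescribed outer normal. The remaining case $p\in\Pi$ forces $\psi(\Pi)=\Pi$, so $\psi$ carries the face $D_1$ onto $\Pi\cap M_2$; here $\Pi\notin\Delta$ means $\Pi$ meets $\inte C(S_2,p)$, and I would close this sub-case with a separating-line argument inside $\Pi$ in the spirit of the proof of Lemma \ref{selene}, again using the common section of condition (A). The real work is the propagation step; the rest is bookkeeping.
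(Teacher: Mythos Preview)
Your proposal has a genuine gap at exactly the point you flag as ``the real work'': the propagation step is not an argument, it is a hope. Concretely, for a tangent plane $T$ of $S_2$ through $x$, the translation $\alpha_T$ carrying $T\cap K_2$ onto the corresponding section of $K_1$ does have its component along the normal of $T$ prescribed, so $\alpha_T=\psi(x)-x+\beta_T$ with $\beta_T$ tangent to $T$; but nothing you have written constrains $\beta_T$. Condition~(A) pins down the translation (to zero) only on the single plane $G(u_0)$, and $G(u_0)$ need not contain $x$ at all, so there is no obvious mechanism to transport that information to the pencil of tangent planes through $x$. You also cannot invoke the equality $K_1^p=K_2^p$ here, since in the paper that is Lemma~\ref{choco} and is proved \emph{using} Lemma~\ref{chicago}. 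Without a concrete device to force $\beta_T=0$ (or at least to force $x+\alpha_T$ to lie on the fixed plane $\psi(\Pi)$), the conclusion ``$\psi(x)\in\bd K_1$ and $\psi(\Pi)$ supports $K_1$ there'' is unsupported, and the remainder of the outline collapses.

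The paper's proof takes a completely different route and never attempts such a propagation. It fixes a point $z_2\in\rint(\Pi\cap K_1\cap K_2)$, chooses a tangent plane $\Gamma_2$ of $S_2$ through $z_2$ whose corresponding plane $\Gamma_1$ (tangent to $S_1$) misses a disc $D\supset(\Pi\cap K_1)\cup(\Pi\cap K_2)$ entirely, and observes that the translation vector $\alpha$ carrying $\Gamma_2\cap K_2$ to $\Gamma_1\cap K_1$ cannot be parallel to $\Pi$ (else $\alpha+z_2\in\Gamma_1\cap(\Pi\cap K_1)\subset\Gamma_1\cap D=\emptyset$). Hence the translate of the segment $(\Gamma_2\cap K_2)\cap\Pi$ lands in $\bd K_1\setminus\Pi$ and is parallel to $\Pi$. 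Varying $\Gamma_2$ produces a family of nondegenerate boundary segments of $K_1$ whose directions sweep out a set of positive measure in $\mathbb{S}^2$, contradicting the Ewald--Larman--Rogers theorem on directions of line segments in the boundary of a convex body. This argument uses only one tangent plane at a time and an external measure-theoretic obstruction, rather than any rigidity or continuity of the translation field; that is what makes it go through where your propagation does not.
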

\begin{figure}[h]
\hspace{1cm}
\includegraphics[scale=.45]{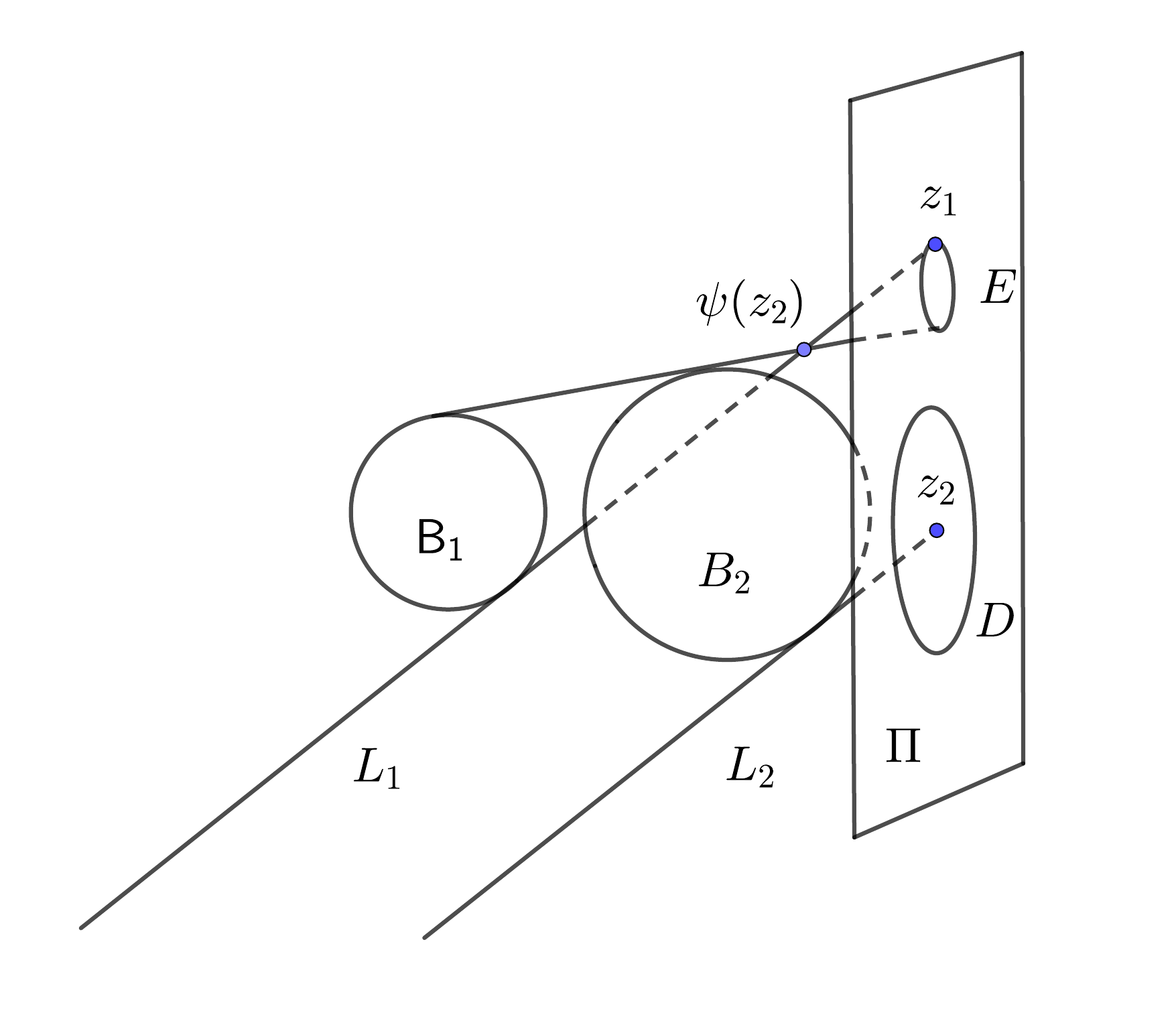}
\caption{There is no supporting plane $\Pi $ of $K_1,K_2$ such that $\Pi\notin \Delta$ and $\dims[\Pi \cap (K_1\cap K_2)]=2$.}
\label{casi}
\end{figure}
\begin{proof}
Contrary to the statement of Lemma \ref{chicago}, suppose that there exists a supporting plane $\Pi$ of $K_1,K_2$ such that $\dims[\Pi \cap (K_1\cap K_2)]=2$, $\Pi \notin \Delta$. We denote by $A,B$ the sets $\Pi \cap K_1$, $\Pi \cap K_2$, respectively and let $D\subset \Pi$ be a disc such that $A\cup B\subset D$. Let $L_1,L_2$ be a pair of supporting lines of $S_1, S_2$, respectively, such that $L_1,L_2$ are parallel and $z_1:=L_1\cap \Pi \notin D$ and $z_2:=(L_2\cap \Pi)\in A\cap B$ (See Fig. \ref{casi}). Let $\Gamma_1,\Gamma_2$ be a pair of corresponding planes of $S_1,S_2$ passing through $\psi(z_2)$ and $z_2$ such that $\Gamma_1 \cap D=\emptyset$. By the hypothesis, there exists $\alpha\in \Rt$ such that $\Gamma_1\cap K_1= \alpha+(\Gamma_2\cap K_2)$. We claim that $\alpha$ cannot be parallel to $\Pi$. Otherwise, the equality $(\Gamma_1\cap K_1)\cap \Pi= \alpha+[(\Gamma_2\cap K_2)\cap \Pi]$ would hold. Consequently, $\alpha+z_2\in (\Gamma_1\cap K_1)\cap \Pi=\Gamma_1\cap (K_1\cap \Pi)=\Gamma_1\cap A$. By virtue that $A\subset D$ and $\Gamma_1\cap D=\emptyset$ it follows that $\Gamma_1 \cap A=\emptyset$. This contradiction proves our claim.

As a corollary we obtain that line segment $\alpha+[(\Gamma_2\cap K_2)\cap \Pi]$ is contained in $\bd K_1\backslash \Pi$ and it is parallel to $\Pi$.

By the choice of $z_1$ and $z_2$ it is possible to take $\Gamma_1, \Gamma_2$ in such a way that we can construct a family of line segments (non-degenerated in a point) contained in $\bd K_1\backslash \Pi$, parallel to $\Pi$ and whose corresponding set of direction is a planar set in $\mathbb{S}^2$ and whose measure is not zero, however, this contradicts the Theorem 1 of \cite{ewald}. Such contradiction shows that our initial assumption is false. From this the Lemma follows. 
\end{proof}

Now we suppose that $p\notin K_2(u)$. Let $L_1 \subset G(u)$ be a supporting line of $ K_2(u)$, $p\in L_1$. In virtue that the points $q_1=G(u)  \cap S_1$, $q_2= G(u) \cap S_2 $ belongs to $\inte K_2(u)$, $L_1$ is not contained in $C(S_2,p)$. Thus there exists a supporting plane $G(v)$ of $C(S_2,p)$ such that $L_1\subset G(v)$ and $G(u)\not=G(v)$.  
\begin{lemma} \label{mambo} 
The equality 
\begin{eqnarray}\label{alex}
G(v) \cap K_1=G(v) \cap K_2
\end{eqnarray} 
holds.
\end{lemma}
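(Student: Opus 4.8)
The plan is first to recognize that $G(v)$ forms a pair of corresponding planes with itself, then to deduce from Theorem~\ref{paris} that $G(v)\cap K_1$ is a translate of $G(v)\cap K_2$, and finally to show the translation is trivial. For the first point: since $v\in\Sigma$, the plane $G(v)$ is a supporting plane of the cone $C(S_2,p)$; as a supporting plane of the circular cone $C(S_2,p)$ it is tangent to $S_2$, and $S_2$ lies in the closed half-space $E(v)$ bounded by $G(v)$. Because $C(S_1,p)=C(S_2,p)$, the same plane $G(v)$ supports $C(S_1,p)$, hence is tangent to $S_1$, and $S_1\subset E(v)$ as well (it lies in the common cone, which lies in $E(v)$). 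Thus $G(v)$ is, for each of $S_1$ and $S_2$, the supporting plane with outer normal $v$, so by the definition of corresponding planes $G(v)$ corresponds to itself. By the hypothesis of Theorem~\ref{paris} there is then a translation $\psi$ with $G(v)\cap K_1=\psi(G(v)\cap K_2)$, i.e.\ $G(v)\cap K_1=\alpha+(G(v)\cap K_2)$ for a suitable vector $\alpha$.

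Next, write $M_i:=G(v)\cap K_i$. The line $L_1$ of the statement is contained in $G(u)\cap G(v)$, and since $G(u)\ne G(v)$ are distinct planes through $p$ this intersection is a line, so $L_1=G(u)\cap G(v)$. By condition~(A), $G(u)\cap K_1=K_2(u)=G(u)\cap K_2$; hence, inside the plane $G(u)$, the line $L_1$ is a supporting line of both $G(u)\cap K_1$ and $G(u)\cap K_2$, meeting each along the same set $T:=L_1\cap K_2(u)$. Since $G(u)$ cuts the interiors of $K_1$ and $K_2$ (it is tangent to $S_1$ and to $S_2$, which lie in those interiors), $T\subset\bd K_1\cap\bd K_2$ and $L_1\cap\inte K_i=\emptyset$; and since $G(v)$ also cuts $\inte K_1$ and $\inte K_2$, the line $L_1$ is a supporting line of the planar convex figure $M_i$ inside $G(v)$, with $M_i\cap L_1=K_i\cap L_1=T$ for $i=1,2$.

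The heart of the proof is to conclude $\alpha=0$. First I claim that $M_1$ and $M_2$ lie on the same side of $L_1$ within $G(v)$: the set $G(v)\cap C(S_2,p)$ is a single ray $\rho$ issuing from $p$ (a supporting plane of the circular cone $C(S_2,p)$ meets it along one ruling), and since $S_i\subset C(S_i,p)$ and $C(S_1,p)=C(S_2,p)$, the tangent points $G(v)\cap S_1$ and $G(v)\cap S_2$ both lie on $\rho$. These points lie in $\rint M_1$ and $\rint M_2$ respectively, while $\rho$ cannot be contained in $L_1$ (otherwise those points would lie on $L_1$, contradicting that $L_1$ supports $M_i$); hence $\rho\setminus\{p\}$, and therefore $\rint M_1$ and $\rint M_2$, lie in one open half-plane of $G(v)\setminus L_1$. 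Now take coordinates in $G(v)$ with $L_1=\{y_2=0\}$ and $M_1,M_2\subset\{y_2\ge 0\}$. Both $M_i$ meet $L_1$, so $\min_{M_i}y_2=0$; applying this to $M_1=\alpha+M_2$ forces $\alpha_2=0$, so $\alpha$ is parallel to $L_1$ and $\alpha+L_1=L_1$. Then $T=M_1\cap L_1=\alpha+(M_2\cap L_1)=\alpha+T$, and since $T$ is a nonempty compact set this gives $\alpha=0$. Hence $M_1=M_2$, which is the equality~(\ref{alex}).

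The step I expect to be the main obstacle is the ``same side of $L_1$'' claim. A priori a translation could carry $M_2$ to a position on the opposite side of the common supporting line $L_1$, and ruling this out is exactly where one must exploit that $L_1$ passes through the apex $p$ of the common cone $C(S_1,p)=C(S_2,p)$ and that $G(v)$ touches both spheres along the single ruling of that cone contained in $G(v)$.
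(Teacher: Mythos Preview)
Your proof is correct and follows essentially the same approach the paper intends: the paper's entire proof is the single sentence ``Using the same argument that in the proof of Lemma~\ref{caboe},'' and your argument is precisely the natural adaptation of that lemma's proof to the present setting. In Lemma~\ref{caboe} the ``same side of $L(u)$'' step works because the single sphere $\mathbb{S}^2\subset K_1\cap K_2$ forces its tangent point with $G(u)$ to lie in the relative interior of both sections; here you correctly replace that by the observation that the two tangent points $G(v)\cap S_1$ and $G(v)\cap S_2$ lie on the common ruling $\rho=G(v)\cap C(S_2,p)=G(v)\cap C(S_1,p)$, hence on the same side of $L_1$. The remaining steps (common supporting line $L_1$, common trace $T=L_1\cap K_1(u)=L_1\cap K_2(u)$ from condition~(A), and the conclusion $\alpha=0$) are exactly as in Lemma~\ref{caboe}. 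If anything, your write-up is more explicit than the paper's own argument.
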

 \begin{proof} 
Using the same argument that in the proof of Lemma \ref{caboe} the equality (\ref{alex}) follows.    
\end{proof}
 For the sets $K_i^p:=(\mathbb{R}^n \backslash \inte C(K_i,p)) \cap \bd K_i$, respectively, $i=1,2$, where the point $p \in \mathbb{R}^n$ is given by the condition (C), we have the following lemma.
\begin{lemma} \label{choco} 
The equality $K_1^p=K_2^p$ holds.
\end{lemma}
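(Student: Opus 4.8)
The plan is to reinterpret the sets $K_i^{p}$ geometrically, reduce the equality to a statement about the sections of $K_1,K_2$ cut by the members of $\Delta$, and then propagate a single coincidence of sections to the whole family.

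First I would note that, since $K_i\subset\text{C}(K_i,p)$ and $\text{C}(K_i,p)$ is closed with boundary $\text{S}(K_i,p)$, a boundary point of $K_i$ fails to lie in $\inte\text{C}(K_i,p)$ exactly when it lies on $\text{S}(K_i,p)$. Hence $K_i^{p}=\Sigma(K_i,p)$, the graze of $K_i$ from $p$, so Lemma \ref{choco} is equivalent to $\Sigma(K_1,p)=\Sigma(K_2,p)$ (equivalently, $\text{C}(K_1,p)=\text{C}(K_2,p)$). Since the hypotheses are symmetric in the two indices — recall that, after the renaming of $z+K_1$ and $z+S_1$ made above, condition (A) reads $G(u)\cap K_1=G(u)\cap K_2$ and condition (C) reads $\text{C}(S_1,p)=\text{C}(S_2,p)$ — it suffices to prove one inclusion, say $\Sigma(K_1,p)\subset\Sigma(K_2,p)$.

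The argument rests on two facts. \emph{(i)} Every supporting plane $\Pi$ of $K_i$ passing through $p$ lies in $\Delta$: from $S_i\subset K_i\subset\Pi^{-}$ and $p\in\Pi$ one gets $p+\lambda(y-p)\in\Pi^{-}$ for all $y\in S_i$ and $\lambda\geq 0$, so $\text{C}(S_i,p)\subset\Pi^{-}$ while $p\in\Pi\cap\text{C}(S_i,p)$; thus $\Pi$ supports $\text{C}(S_2,p)=\text{C}(S_1,p)$. \emph{(ii)} For every $w$ with $G(w)\in\Delta$ one has $G(w)\cap K_1=G(w)\cap K_2$; this is the analogue of Lemma \ref{mambo} for the whole family $\Delta$. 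I would obtain (ii) by propagating the base equality $G(u)\cap K_1=G(u)\cap K_2$ along $\Delta$: if $G(w_0)\cap K_1=G(w_0)\cap K_2=:\Lambda$ and $L\subset G(w_0)$ is a supporting line of $\Lambda$ through $p$, then, as $L$ meets neither $\inte K_1$ nor $\inte K_2$, it lies in a supporting plane of each body; hence the second member $G(w_1)$ of $\Delta$ containing $L$ (one exists, $\neq G(w_0)$, exactly as in the proof of Lemma \ref{mambo}) has $L$ as a common supporting line of the translated sections $G(w_1)\cap K_1$ and $G(w_1)\cap K_2$, which moreover share the segment $L\cap K_1=L\cap K_2=L\cap\Lambda$ and lie on the same side of $L$; by the argument of Lemma \ref{caboe} they coincide. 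Continuity of the sections and connectedness of the family $\Delta$ then carry the equality to all of $\Delta$.

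Putting the two facts together: given $x\in\Sigma(K_1,p)$, the ray from $p$ through $x$ is a ruling of $\text{S}(K_1,p)$, so it lies in a supporting plane $\Pi$ of $K_1$ through $p$; by (i), $\Pi\in\Delta$, and by (ii), $\Pi\cap K_1=\Pi\cap K_2$, whence $x\in\Pi\cap K_2$. It remains to check that $x\in\Sigma(K_2,p)$, i.e. that $\Pi$ also supports $K_2$. If it did not, $\Pi$ would cross $\inte K_2$, so the two-dimensional face $F:=\Pi\cap K_1=\Pi\cap K_2$ of $K_1$ would simultaneously be a section of $K_2$ through $\inte K_2$; comparing the sections of $K_1$ and of $K_2$ cut by a plane tilted slightly off $\Pi$ towards the side on which $K_2$ protrudes — the second retaining area bounded away from $0$ while the first degenerates as the tilt tends to $0$ — contradicts that corresponding sections of $K_1$ and $K_2$ are congruent; alternatively one extracts from this configuration a common supporting plane of $K_1$ and $K_2$ outside $\Delta$ with two-dimensional intersection, contradicting Lemma \ref{chicago}. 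I expect this exclusion of a transversal crossing along $\Pi$ to be the delicate point; granting it, $x\in\Sigma(K_2,p)$, and the symmetric inclusion finishes the proof.
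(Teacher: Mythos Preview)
Your reinterpretation $K_i^{p}=\Sigma(K_i,p)$ and your fact (i) are fine, and the reduction to establishing (ii) --- that $G(w)\cap K_1=G(w)\cap K_2$ for every $G(w)\in\Delta$ --- is exactly what the paper's proof accomplishes. The gap is in how you obtain (ii).

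Your propagation step is precisely Lemma~\ref{mambo}: from a plane $G(w_0)\in\Delta$ with coinciding sections you pass, via a supporting line of that common section through $p$, to the second plane $G(w_1)\in\Delta$ through that line, and the caboe-type argument gives equality there. But iterating this produces only a \emph{discrete orbit} on the circle $\Sigma$: the assignment $w_0\mapsto w_1$ is a circle self-map whose dynamics depend on the shape of $K_2(w_0)$, and its orbit through $u$ need not be dense. Your appeal to ``continuity of the sections and connectedness of $\Delta$'' does not close this; the set $\{w\in\Sigma:G(w)\cap K_1=G(w)\cap K_2\}$ is certainly closed, but you have given no reason for it to be open, and the discrete propagation does not supply one.

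The paper fills exactly this gap, and the idea you are missing is to use lines through $p$ that meet the \emph{interior} of the base section $K_2(u)$, not just its supporting lines. For each such line $L_\tau\subset G(u)$ there is a second plane $G(w)\in\Delta$ through $L_\tau$, and as $\tau$ sweeps the segment $[q_1,q]$ these planes sweep the full arc of $\Sigma$ between $u$ and $v$. Along $L_\tau$ the two sections $K_1(w),K_2(w)$ no longer share a supporting line; instead they share four boundary points $a,b,c,d$ (two on $L_\tau$, two on $G(w)\cap G(v)$, the latter available because Lemma~\ref{mambo} already gives $K_1(v)=K_2(v)$). Here the paper invokes Lemma~\ref{parcera}: two translated planar convex figures with four common boundary points coincide unless $\square abcd$ is a trapezoid whose parallel edges lie in both boundaries. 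The trapezoid alternative is then eliminated by choosing a second interior line $L_\rho$, producing six common points and forcing a two-dimensional piece of $\bd K_1\cap\bd K_2$ in a common supporting plane outside $\Delta$ --- which Lemma~\ref{chicago} forbids. This four-point/trapezoid mechanism, using Lemmas~\ref{parcera} and~\ref{chicago}, is what converts a single coincidence into coincidence on a whole arc of $\Sigma$; your sketch bypasses both lemmas and therefore cannot reach the continuous family.
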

 \begin{proof}  
 Let $q$ be a point in $L_1\cap   K_2(u)$. We take a line $L_{\tau}\subset G(u)$ such that $p\in L_{\tau}$ and it has an interior point $\tau$ of $K_2(u)$ in the line segment $[q_1q]$, $q_1\not=\tau\not=q$. Notice that, since $q_1 \in \inte K_2$ and since all the points of $ K_2(u)\backslash \bd K_2$ are interior points of $K_2$, $\tau\in \inte K_2$. The line $L_{\tau}$ is not contained in $S(K_2,p)$ thus there exists a supporting plane $G(w)$ of $S(K_2,p)$ so that $L_{\tau}\subset G(w)$, $G(w) \not= G(u)$ and $G(w) \not= G(v)$. Then, by Lemma \ref{mambo}, the sections $ K_1(w)$ and $K_2(w)$ have four points in common, the points given by the intersection of the lines $L_{\tau}$ and $G(w) \cap G(v)$ with $\bd K_2$. We denote by $a,b$ and $c,d$ such points, respectively. By hypothesis   there exists a vector $\alpha \in G(w)$ such that $K_1(w)= \alpha+ K_2(w)$. Then either: $\alpha=0$, i.e., $ K_1(w)=  K_2(w)$ or $\alpha \not=0$. If $\alpha=0$, we finish. 
\begin{figure}[h]
\hspace{1cm}
\includegraphics[scale=.4]{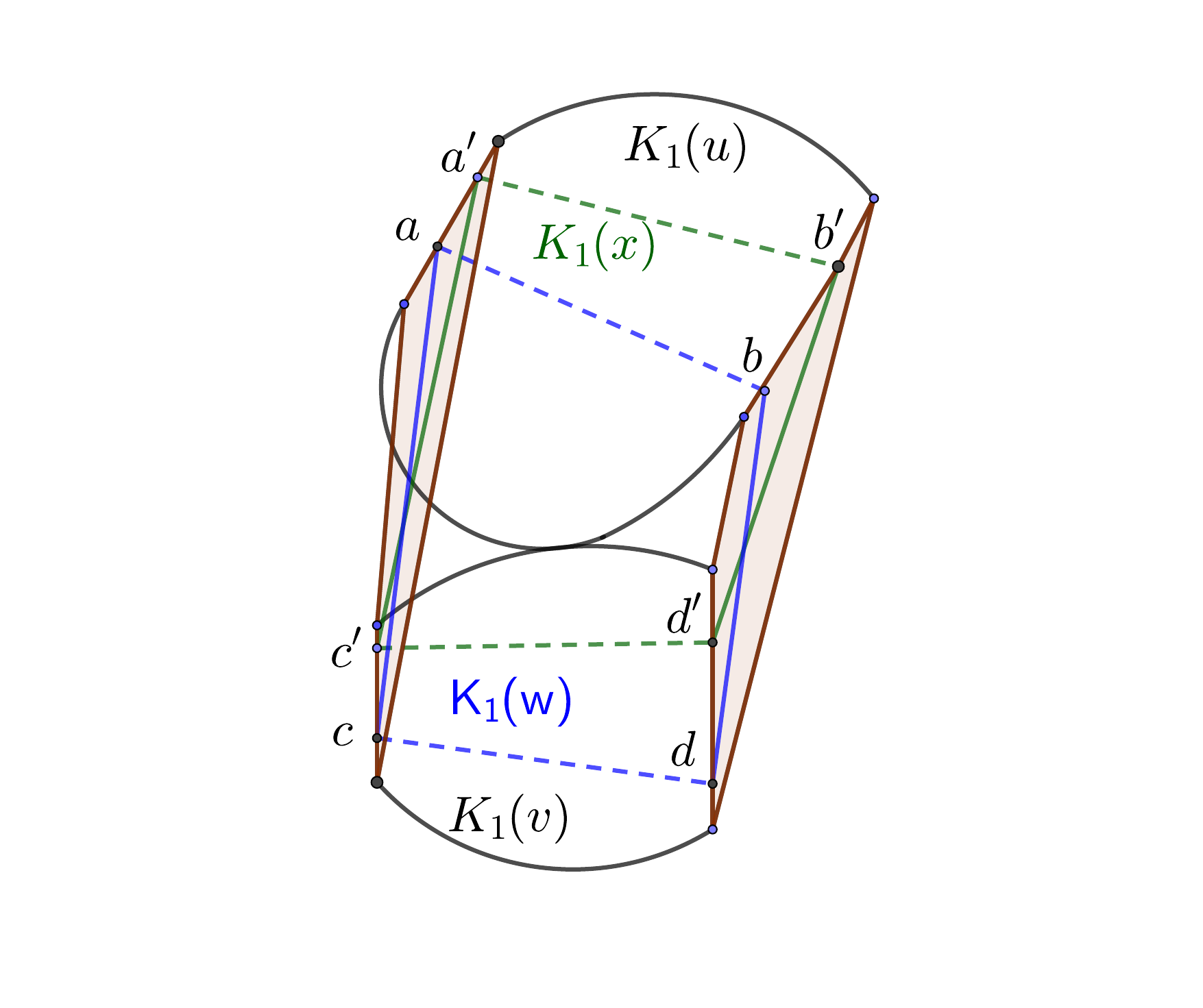}
\caption{The equality $K_1^p=K_2^p$ holds.}
\label{ya}
\end{figure}
 On the other hand, we suppose now that  $\alpha \not=0$. Then, by Lemma \ref{parcera}, the quadrilateral $ \square abcd$ is a trapezoid and it has one pair of edges parallel to $\alpha$. If the line segment $ab$ is parallel to $\alpha$, then $ab \subset \bd K_2$, however it would contradict that $\tau \in ab$ is an interior point of $K_2$. Thus the edges $bc$ and $da$ are parallel to $\alpha$ and  
$bc, da\subset \bd K_1\cap \bd K_2$. We take a line $L_{\rho}\subset G(u)$ such that $p\in L_{\rho}$ and it has an interior point $\rho$ of 
$ K_2(u)$ in the line segment $q_1q$, $\tau\not= \rho$. The line $L_{\rho}$ is not contained in $S(K_2,p)$ thus there exists a supporting plane $G(x)$ of $S(K_2,p)$ such that $L_{\rho}\subset G(x)$, $G(x) \not= G(u)$, $G(x) \not= G(v)$ and $G(x) \not= G(w)$. The sections $ K_1(x)$ and $ K_2(x)$ are translated and have six points in common, the points given by the intersection of the lines $L_{\rho}$, $G(x) \cap G(v)$ and $G(x) \cap G(w)$ with $\bd K_1$. We denote by $a',b'$, $c', d'$ and $e', f'$ such points, respectively. As we have seen above the segments $a',b'$, $c', d'$ and $e', f'$ cannot be parallel. If the quadrilateral $\square a'b'c'd'$ is a trapezoid with $a'c'$ and $b'd'$ a pair of edges parallel, then the points $\{a,a',c,c'\}$ and $\{b,b',d,d'\}$ defined a pair of parallel supporting planes $\Pi_1,\Pi_2$ of $K_2$ and $\conv \{a,a',c,c'\},\conv \{b,b',d,d'\}\subset \bd K_1\cap \bd K_2$ (See Fig. \ref{ya}).  However, according Lemma \ref{chicago}, this is impossible. Thus $\square a'b'c'd'$ is not a trapezoid and, consequently, $ K_1(x)= K_2(x)$.

As the supporting planes $G(s)$ of $C(S_2,p)$, $s\in \textrm{arc} (u,v)\subset \Sigma$ are in correspondence with the points $t\in [p_1,q], t\not=\tau$ we get that for every $s\in \textrm{arc} (u,v)\subset \Sigma$ the equality $G(s)\cap K_1=G(s)\cap K_2$ holds. 

Finally we can extent the previous argument for all $s\in \Sigma$ 
(considering the second supporting line of $ K_2(v)$ passing through $p$ we find a vector $z\in \Sigma$ such that $  K_1(z)= K_2(z)$ and repeat the argument from above to $ K_2(v)$ and $ K_2(z)$ and so on until we cover the set $\Sigma$). Now the proof of Lemma \ref{choco} is complete.  
\end{proof}
\begin{lemma}\label{ade}
It is not possible to have $K_1=K_2$ and $S_1$ not contained in $S_2$ in Theorem \ref{paris}, i.e., there are no convex body $K\subset \mathbb{R}^{3}$, two spheres $S_1,S_2\subset \inte K$ of radii $r_1, r_2$, $r_1<r_2$, $S_1$ is not contained in $S_2$, such that, for every pair of corresponding planes $\Pi_1, \Pi_2$ of $S_1$ and $S_2$, there exists a translation $\psi: \mathbb{R}^{n} \rightarrow \mathbb{R}^{n}$ such that
\[
\psi(\Pi_2\cap K) = \Pi_1\cap K.
\]	 
\end{lemma}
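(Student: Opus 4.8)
The plan is to argue by contradiction, reducing to the configuration already set up in this section. Assume that a convex body $K$ and spheres $S_1,S_2\subset\inte K$ with $r_1<r_2$ and $S_1$ not contained in $S_2$ satisfy the translation property. First I would normalise the coordinates so that $p_2$ is the origin and $r_2=1$, so $S_2=\mathbb{S}^2$; then $S_1\not\subset S_2$ forces $\|p_1\|+r_1>1$, and in particular $p_1\neq0$. Fix $u\in\mathbb{S}^2$ and let $t=t(u)$ be the vector provided by the hypothesis, so that $(p_1+r_1G(u))\cap K=t+(G(u)\cap K)$; comparing the levels of these two parallel cutting planes gives $\langle t,u\rangle=\langle p_1,u\rangle+r_1-1$, and hence, setting $z=z(u):=-t$, one checks directly that condition (A) of this section holds with $K_1:=z+K$ and $K_2:=K$. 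Since the cutting planes $p_1+r_1G(u)$ and $G(u)$ are distinct whenever $\langle p_1,u\rangle\neq1-r_1$, we get $z(u)\neq0$ for all $u$ off that single great circle; and for all $u$ outside a further nowhere dense set the sphere $z(u)+S_1$ is not internally tangent to $S_2$, so condition (C) holds, with an apex $p=p(u)\in G(u)$. Thus for a dense open set of directions $u$ the data $\bigl(z(u)+K,\;K,\;S_1,\;S_2\bigr)$ satisfy the running hypotheses of this section.

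Next I would feed this into the lemmas already proved. For each admissible $u$, Lemmas \ref{chicago}, \ref{mambo} and \ref{choco} give
\[
G(s)\cap(z(u)+K)=G(s)\cap K\ \text{ for all }s\in\Sigma(u),\qquad (z(u)+K)^{p(u)}=K^{p(u)},
\]
where $\Sigma(u)$ denotes the set of $s$ for which $G(s)$ is a supporting plane of $C(S_2,p(u))$. The last equality says that the two‑dimensional cap $K^{p(u)}$ lies on $\bd(z(u)+K)$ as well as on $\bd K$; equivalently, the translate $K^{p(u)}-z(u)$ is contained in $\bd K$. Hence the set $\{x\in\bd K:\ x-z(u)\in\bd K\}$ contains a two‑dimensional portion of $\bd K$, although $z(u)\neq0$.

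Finally I would extract the contradiction. Two distinct translates of a convex body can share a two‑dimensional subset of their boundaries only along a portion of $\bd K$ which is a cylinder with generators parallel to $z(u)$ (otherwise $\{x\in\bd K:\ x-z(u)\in\bd K\}$, being a level set of a function strictly monotone in the direction $z(u)$, would be at most one‑dimensional); in particular $\bd K$ contains a non‑degenerate segment parallel to $z(u)$. Letting $u$ range over a two‑dimensional open subset of $\mathbb{S}^2$ and collecting the segments so obtained, one gets line segments on $\bd K$ whose set of directions has positive two‑dimensional measure on $\mathbb{S}^2$, contradicting Theorem 1 of \cite{ewald}, exactly as in the proof of Lemma \ref{chicago}. (Alternatively, one expects the accumulated section and cap identities, taken over all admissible $u$, to force $K$ to be a ball, and then equating the translated sections $(p_1+r_1G(u))\cap K$ and $G(u)\cap K$ of a ball forces $p_1=0$, i.e. $S_1\subset S_2$, again a contradiction.) The main obstacle will be precisely this last step — converting the coincidence of the caps, and of the sections over $\Sigma(u)$, of the two translates into a genuinely two‑parameter family of segment directions on $\bd K$ — which amounts to controlling the failure of strict convexity of $K$ and ruling out that the assignment $u\mapsto z(u)$ degenerates onto a curve.
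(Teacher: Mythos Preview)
Your route is quite different from the paper's, and the obstacle you flag at the end is a genuine gap, not just a technicality. The paper does not touch Lemmas \ref{chicago}--\ref{choco} in the proof of Lemma \ref{ade} at all. Instead it argues directly with slice areas: for every pair of corresponding planes the sections are translates, hence have equal area; in particular, writing $A(t)=\operatorname{area}\bigl(K\cap\{\langle x,u\rangle=t\}\bigr)$, one gets $A(h_1)=A(h_2)$ and $A(g_1)=A(g_2)$ for the four heights $h_1=\langle p_1,u\rangle+r_1$, $h_2=1$, $g_1=\langle p_1,u\rangle-r_1$, $g_2=-1$ coming from the two pairs of corresponding planes orthogonal to $u$. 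Because $S_1\not\subset S_2$, one may choose $u$ so that $g_2<g_1<h_2<h_1$; then the two equalities, together with the concavity of $\sqrt{A}$ from the Brunn--Minkowski inequality, force $A$ to be constant on $[g_2,h_1]$, i.e.\ $K$ is a cylinder over that slab. The paper rules this out by first choosing the direction generically via \cite{ewald}, obtaining the contradiction in a few lines.

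Your argument, by contrast, needs the map $u\mapsto z(u)/\|z(u)\|$ to have two--dimensional image in order to invoke \cite{ewald}, and nothing you have written enforces this: the constraint $\langle z(u),u\rangle=1-r_1-\langle p_1,u\rangle$ fixes only the $u$--component, while the transverse component of $z(u)$ is dictated by how the sections of $K$ line up and could in principle always lie along the fixed line through $p_1$ and $p_2$. A second, smaller point you do not address: Lemmas \ref{mambo} and \ref{choco} are proved under the standing assumption $p\notin K_2(u)$ made just before Lemma \ref{mambo}, and in your setting nothing prevents the apex $p(u)$ of the common tangent cone from lying inside $K$. Both issues would have to be resolved before your scheme could close; the paper's area argument sidesteps them entirely.
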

\begin{proof}
Contrary to the statement of the Lemma \ref{ade}, let us assume that there are convex body $K\subset \mathbb{R}^{3}$, two spheres $S_1,S_2\subset \inte K$ of radius $r_1, r_2$, $r_1<r_2$, $S_1$ is not contained in $S_2$, such that, for every pair of corresponding planes $\Pi_1, \Pi_2$ of $S_1$ and $S_2$, there exists a translation $\psi: \mathbb{R}^{n} \rightarrow \mathbb{R}^{n}$ such that $
\psi(\Pi_2\cap K) = \Pi_1\cap K$.	 

Since the measure of the set of directions parallel to the segments contained in $\bd K$ is zero, according with \cite{ewald}, we can fin a direction $w\in \Sd$ and a pair of corresponding planes $\Pi_1,\Pi_2$ such that there is no a line segment parallel to $w$, contained in $\bd K$, and $\Pi_1\cap K=w+(\Pi_2 \cap K)$. Notice that
\begin{eqnarray}\label{podromu}
\textrm{area}(\Pi_1\cap K)=\textrm{area}(\Pi_2\cap K),
\end{eqnarray}
where $\textrm{area}(Z)$ denotes the area of a set $Z\subset \Rd$.

Let $\Gamma_1, \Gamma_2$  be a pair of corresponding planes with $\Gamma_1$ parallel to $\Pi_1$. Thus there exists $\bar{w}\in \Sd$ such that $\Gamma_1\cap K=\bar{w}+(\Gamma_2\cap K)$ which implies that 
\begin{eqnarray}\label{citlali}
\textrm{area}(\Gamma_1\cap K)=\textrm{area}(\Gamma_2\cap K).
\end{eqnarray}
On the other hand, since the planes $\Pi_1,\Pi_2$  are corresponding there exists $u\in \mathbb S^{3}$ such that 
\[
\Pi_1=p_1+r_1G(u)\textrm{      }\textrm{      }\textrm{     and    }\textrm{      }\textrm{      }S_1\subset p_1+r_1E(u) 
\]
\[
\Pi_2=G(u)\textrm{      }\textrm{      }\textrm{     and    }\textrm{      }\textrm{      }S_2\subset E(u)  
\]
(notice that $p_2$ is the origin and $r_2=1$).

First, we assume that $S_1\cap S_2\not=\emptyset$. Then 
\begin{eqnarray}\label{anton}
\Pi_1\cap K\subset  F(u) 
\end{eqnarray}
and
\begin{eqnarray}\label{plenitud}
\Gamma_1\cap K, \Gamma_2 \cap K\subset E(u) \textrm{  }\textrm{(See Fig. \ref{end1}).}
\end{eqnarray}
By (\ref{podromu}), (\ref{anton}), (\ref{plenitud}), the choice of $w$ and the Brunn's inequality for slice volumes (Theorem 12.2.1 P. 297 of \cite{matousek})
\begin{eqnarray}\label{magali}
\textrm{area}(\Gamma_1\cap K)\not=\textrm{area}(\Gamma_2\cap K).
\end{eqnarray}
However (\ref{magali}) contradicts (\ref{citlali}). 

The case $S_1\cap S_2=\emptyset$ can be considered analogously.
Thus the proof of Lemma \ref{ade} is now complete.
\end{proof}
\begin{figure}[h]
\hspace{1cm}
\includegraphics[scale=1.3]{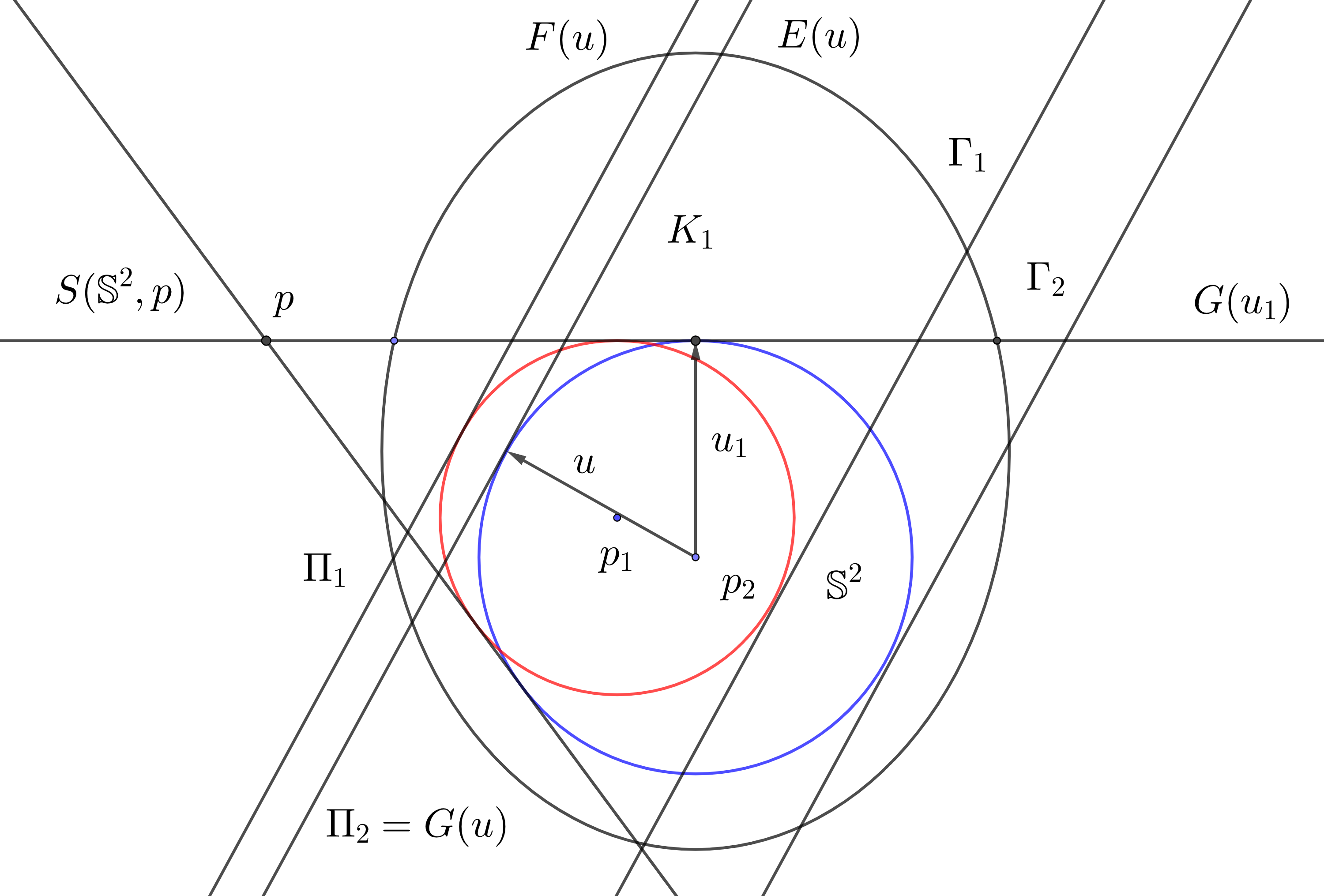}
\caption{It is not possible for $K_1=K_2$ and $S_1$ not contained in $S_2$ in Theorem \ref{paris}.}
\label{end1}
\end{figure}
\section{Proof of Theorem \ref{paris} in dimension 3 for $r_1\not=r_2$.}
\subsection{Conditions (A) and (B) are satisfied simultaneously.}  We suppose that the bodies $K_1,K_2$ are such that every time we translate the body $K_1$ and in order that the condition (A) holds, then the the condition (B) holds. This supposition implies that, since the conditions of the Theorem \ref{paris} are invariant under translations, if the spheres $S_1$  has its center at the origin, then, for all $u\in \Sd$, it follows that
\[
K_2(u) =(1-r_1)u+[r_1G(u) \cap K_1] , \textrm{ }\textrm{ }((1-r_1)u+S_1) \cap   S_2=u 
\textrm{ and }  [(1-r_1)u+S_1]  \subset S_2, 
\]
i.e, it is possible to translate $K_1$ and $S_1$ in order that (A) and (B) holds for all $u \in \Sd$ (See Fig. \ref{ivan}). Let $\Delta: \Rt \rightarrow \Rt$ be a homothety with centre at the origin and coefficient of homothety $r=r_1$. Thus the bodies $K_1$ and $\Delta(K_2)$ have the following property: for each supporting plane $\Pi$ of $S_1$ the sections $\Pi \cap K_1$ and $\Pi \cap \Delta(K_2)$ are homothetic with centre and radius of homothety $\Pi \cap S_1$ and $r$, respectively, i.e., the bodies $K_1$ and $\Delta(K_2)$ satisfies the condition of Lemma \ref{chaparrito} and, consequently, $K_1$ and $\Delta(K_2)$ are two concentric spheres. Therefore $K_1$ and $K_2$ are two concentric spheres.
\begin{figure}[h]
\hspace{1cm}
\includegraphics[scale=.6]{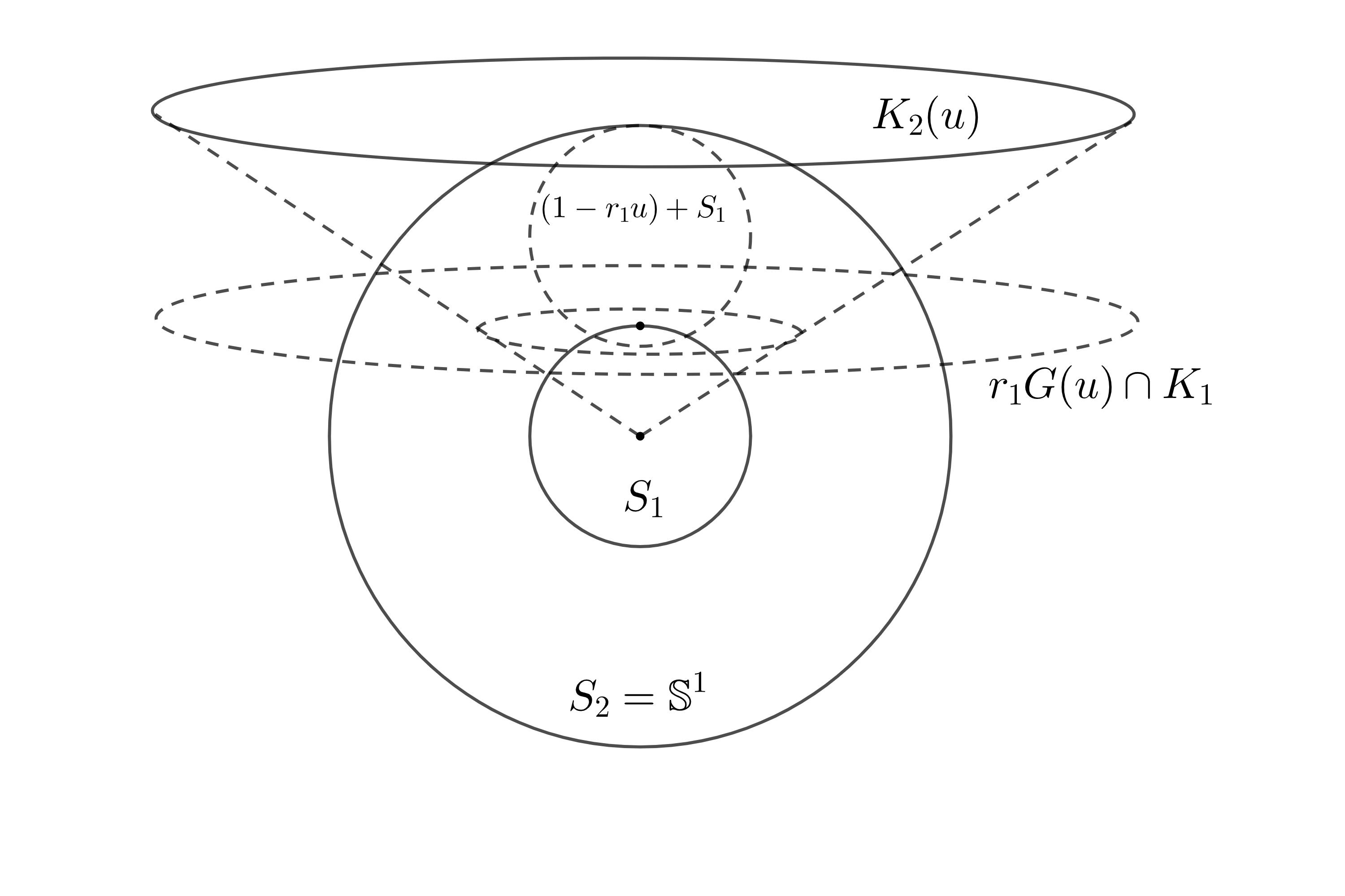}
\caption{Conditions (A) and (B) are satisfied simultaneously.}
\label{ivan}
\end{figure}\subsection{The bodies $K_1,K_2$, the points $p_1,p_2$ and the spheres $S_1,S_2$ are such that there exist vectors $u\in \mathbb{S}^2, z\in \Rt$ for which the conditions (A) and (C) hold.}

By Lemma \ref{choco}, the equality $K_1^p=K_2^p$ holds, however, by Lemma \ref{ade}, $\bd K_1\not=\bd K_2$. Thus there exists a pair of supporting planes of $\Gamma_1$, $\Gamma_2$ of $K_1$ and $K_2$ which are corresponding, close enough to $G(u)$, such that there exists $\bar{z}\in \Rt$ with the properties
\begin{itemize}
 \item [($\mathcal{A})$] $\Gamma_2 \cap K_2=\bar{z} +(\Gamma_1 \cap K_1)$, $\Gamma_2$ is supporting plane of $\bar{z}+S_1$ and the sphere $\bar{z}+S_1$ is contained in $\Gamma^+_2$, the supporting half-space of $S_2$ defined by $\Gamma_2$,  
  ,
 \item [($\mathcal{C})$] there exists a point $\bar{p}\in \Gamma_2\backslash((\bar{z}+K_1)\cup K_2)$ such that $C(\bar{z}+S_1,\bar{p})=C(S_2,\bar{p})$ (See Fig. \ref{infernal}).
\end{itemize}
Furthermore, by Theorem 1 of \cite{ewald}, we can take the planes $\Gamma_1$, $\Gamma_2$ in such a way that
\begin{itemize}
\item [($\mathcal{D}$)]
there is no line segment parallel to $\bar{z}$ and contained neither in $\bd K_1$ nor  $\bd K_2$.
\end{itemize}

We will consider the cylinders 
\[
C:=\{\lambda \bar{z}+x:x\in \Gamma_1\cap K_1, 0\leq \lambda \leq 1\},  C^-:=\{\lambda \bar{z}+x:x\in \Gamma_1\cap K_1,   \lambda <0\},
\] 
the planes $\Gamma_{\lambda} :=\lambda \bar{z}+\Gamma_1$, $\Delta:=\{ x+\lambda \bar{z}: x\in G(u)\cap \Gamma_2, \lambda \in \R\}$ and the half-spaces $\Delta^+, \Delta^-$ defined by $\Delta$, we choose the notation such that $p\in \Delta^-$. Furthermore, we establish the notation $\bar{K}_1=K_1+\bar{z}$,  $\bar{K}_2:=K_2$, 
$\bar{K}^{\bar{p}}_i:=[\Rt\backslash C(\bar{K}_i, \bar{p})]\cap \bd \bar{K}_i$, $i=1,2$.

By Lemma \ref{choco}, the following relations hold
\begin{eqnarray}\label{tierna}
\bd (\Gamma_1 \cap K_1)\cap F(u)\subset \bd K_1, \bd K_2 \textrm{   } \textrm{ and } \textrm{    }\bd (\Gamma_2 \cap K_1)\cap F(u)\subset \bd K_1, \bd K_2.
\end{eqnarray}
By (\ref{choco}), ($\mathcal{D}$) and the Brunn's inequality for slice volumes (Theorem 12.2.1 P. 297 of \cite{matousek}), the section $\Gamma_{\lambda} \cap K_1$ is such that 
\begin{eqnarray}\label{mojadita}
A_{\lambda}:=[\bd (\Gamma_{\lambda} \cap K_1)] \cap \Delta^+ \subset \Rt \backslash C,
\end{eqnarray}
for every $\lambda$, $0< \lambda < 1$, and
\begin{eqnarray}\label{placer}
C_{\lambda}:=[\bd(\Gamma_{\lambda} \cap K_1)\cap \Delta^+] \subset C^-\cap \Delta^+, 
\end{eqnarray}
for every $\lambda$, $\lambda<0$ such that 
$\Gamma_{\lambda} \cap (K_1 \cap \Delta^+)\not=\emptyset$ (See Fig. \ref{patricio}).
Notice that by Lemma \ref{choco} 
\begin{eqnarray}\label{sabrosita}
A_{\lambda}=B_{\lambda}:= [ \bd (\Gamma_{\lambda} \cap K_2)] \cap \Delta^+\textrm{   and    }C_{\lambda}=D_{\lambda}:= [ \bd (\Gamma_{\lambda} \cap K_2)] \cap \Delta^+.
\end{eqnarray}
for every $\lambda$, $0< \lambda < 1$. From (\ref{mojadita}) and (\ref{sabrosita})
\begin{eqnarray}\label{seco}
B_{\lambda}  \subset \Rt \backslash C,
\end{eqnarray}
for every $\lambda$, $0< \lambda < 1$, and, on the other hand, from (\ref{placer}) and (\ref{sabrosita})
\begin{eqnarray}\label{dolor}
D_{\lambda}  \subset C^-\cap \Delta^+, 
\end{eqnarray}
for every $\lambda$, $\lambda<0$.
\begin{figure}[h]
\hspace{1cm}
\includegraphics[scale=.45]{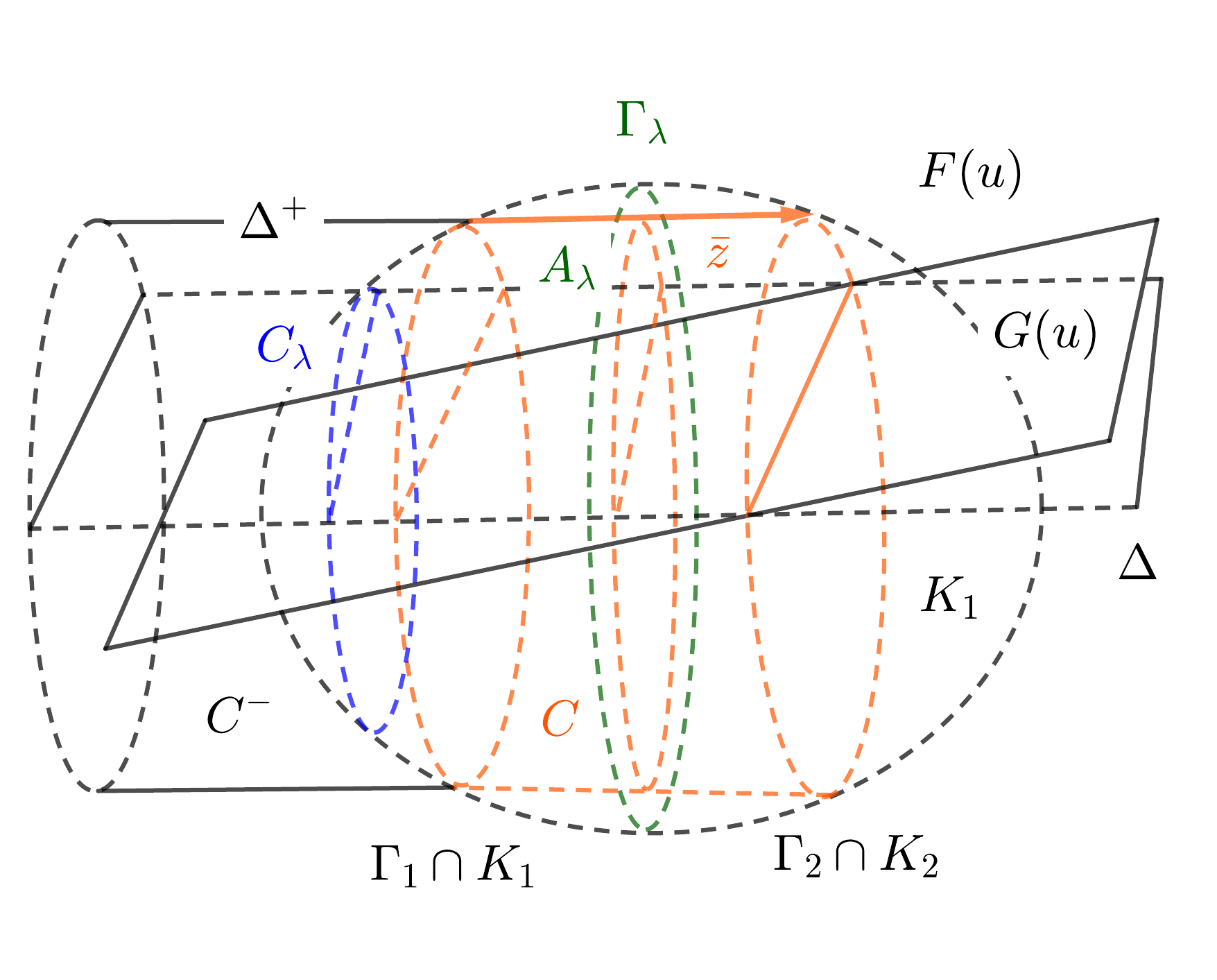}
\caption{The conditions (A) and (C) implies a contradiction}
\label{patricio}
\end{figure}
By (\ref{placer}), for $\epsilon>0$ small enough such that $\Gamma_{-\epsilon} \cap (K_1 \cap \Delta^+)\not=\emptyset$, the relation $C_{-\epsilon} \subset C^-\cap \Delta^+$ holds, consequently 
\begin{eqnarray}\label{blanca}
C_{-\epsilon}+\bar{z}\subset C\cap \Delta^+.
\end{eqnarray}
Notice that $C_{-\epsilon}+\bar{z}\subset \bd \bar{K}_1$. 
By Lemma \ref{choco} applied to the bodies $\bar{K}_1,\bar{K}_2$, by virtue that we are assuming that the conditions ($\mathcal{A})$ and ($\mathcal{C})$ are satisfied, the equality $\bar{K}^{\bar{p}}_1=\bar{K}^{\bar{p}}_2$ holds. Thus $C_{-\epsilon}+\bar{z}\subset \bd \bar{K}_2=K_2$. Since $\Gamma_{-\epsilon}+\bar{z}=\Gamma_{(1-\epsilon)}$ it follows that $C_{-\epsilon}+\bar{z}=B_{(1-\epsilon)}$. Hence \ref{blanca} implies 
\begin{eqnarray}\label{negra}
B_{(1-\epsilon)}\subset C\cap \Delta^+.
\end{eqnarray}
By (\ref{seco}) 
\begin{eqnarray}\label{bebe}
B_{(1-\epsilon)}\subset (\Rt \backslash C).
\end{eqnarray}
From (\ref{negra}) and (\ref{bebe}) we have an absurd. This contradiction was derived by the assumption that the conditions (A) and (C) hold.

\end{document}